\title{Unavoidable patterns in locally balanced colourings}
\author{Nina Kam\v{c}ev\thanks{Department of Mathematics, Faculty of Science, University of Zagreb, 
\texttt{nina.kamcev@math.hr}. Research supported by the European Union’s Horizon 2020 research and innovation programme [MSCA GA No 101038085].}
\and
Alp M\"uyesser \thanks{
University College London, \texttt{alp.muyesser.21@ucl.ac.uk}}}
\theoremstyle{plain}
\newtheorem{theorem}{Theorem}[section]
\newtheorem{lemma}[theorem]{Lemma}
\newtheorem{proposition}[theorem]{Proposition}
\newtheorem{claim}{Claim}
\newtheorem*{claim*}{Claim}
\theoremstyle{definition}
\newtheorem{question}[theorem]{Question}
\newtheorem*{rep@theorem}{\rep@title}
\newcommand{\newreptheorem}[2]{%
\newenvironment{rep#1}[1]{%
 \def\rep@title{#2 \ref{##1}}%
 \begin{rep@theorem}}%
 {\end{rep@theorem}}}
\newcommand\eps{\varepsilon}
\newcommand\Fc{\mathcal{F}}
    \newcommand{\Ac}{\mathcal{A}}
    \newcommand{\Hc}{\mathcal{H}}
    \newcommand{\Lc}{\mathcal{L}}
\def \ptm {P_{3}^{\circ}}
\def \gammep {\eps}
\newcommand{\pr}[1]{\mathbb{P} \left[ #1 \right]}
\newcommand{\er}[1]{\mathbb{E} \left[ #1 \right]}
\begin{document}
\maketitle
    
    \begin{abstract}  Which patterns must a two-colouring of $K_n$ contain if each vertex has at least $\eps n$ red and $\eps n$ blue neighbours? In this paper, we investigate this question and its multicolour variant.  For instance, we show that any such graph contains a $t$-blow-up of an \textit{alternating 4-cycle} with $t = \Omega(\log n)$. 
    
    \end{abstract}

    \section{Introduction}
    Ramsey's theorem guarantees that any $r$-edge-colouring of a large complete graph yields a large monochromatic complete subgraph. In general, we cannot guarantee the existence of anything but monochromatic subgraphs. Indeed, nothing prevents the host graph from being monochromatic itself. However, in recent years, there have been many results stating that in colourings where each colour is well-represented, a richer family of patterns can be guaranteed.  The following, initially suggested by Bollob\'as \cite{CM}, is a prototypical result of this form. Here, $\mathcal{F}_t$ denotes the family of $2$-edge-coloured complete graphs on $2t$ vertices in which one colour forms either a complete bipartite graph with $t$ vertices on each side $(K_{t,t})$ or a clique of order $t$ $(K_t)$. See Figure~\ref{fig:unavoidable2patterns} for a depiction of the  four colourings in $\mathcal{F}_t$ (up to isomorphism).
    
    \begin{figure}[h]
        \centering
        \includegraphics[scale=0.7]{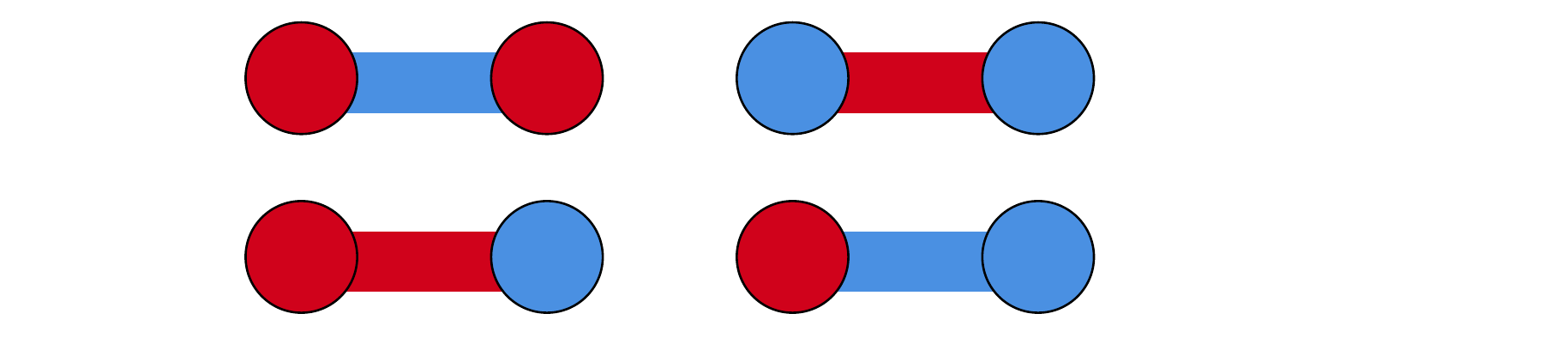}
        \caption{The $4$ types of colourings in $\mathcal{F}_t$. The circles denote cliques of size $t$, whereas the lines connecting the circles denote complete bipartite graphs. In the first row, one colour class forms a $K_{t,t}$, whereas on the second row, one colour class forms a $K_t$.}
        \label{fig:unavoidable2patterns}
    \end{figure}
    
    \begin{theorem}[Fox-Sudakov, Cutler-Mont\'agh \cite{FS, CM}]\label{thm:foxsudakov}
        Let $G$ be a $2$-edge-coloured $K_n$ where each colour class has at least $\eps n^2$ edges, and suppose $n\geq (1/\eps)^{16 t}$. Then, $G$ contains a member of $\mathcal{F}_t$.
    \end{theorem}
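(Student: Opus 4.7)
My approach is to extract a pattern from $\mathcal{F}_t$ by combining dependent random choice with classical Ramsey. Since the red graph has average degree at least $2\eps n$, dependent random choice produces a set $U \subseteq V(G)$ of size at least $\eps^{O(t)} n$ with the property that every $t$-subset $S \subseteq U$ has a common red neighbourhood of size at least $\eps^{O(t)} n$. Because $n \geq \eps^{-16t}$, this already makes $|U|$ much larger than the Ramsey number $R(t,t) \leq 4^t$, so $U$ contains a monochromatic clique of size $t$; call it $S$.

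Suppose first that $S$ is red. By the DRC property the common red neighbourhood $N = N_R(S)$ has size at least $\eps^{O(t)} n$, again vastly exceeding $R(t,t)$, so $N$ contains a second monochromatic $K_t$, say $S'$. The $2t$-vertex set $S \cup S'$ is then the desired substructure: all edges between $S$ and $S'$ are red (by construction), $S$ is a red clique, and $S'$ is a monochromatic clique in one of the two colours. A case analysis in the colour of $S'$, together with the symmetric case where $S$ is blue (handled by running DRC on the blue graph instead), realises each of the four colourings depicted in Figure~\ref{fig:unavoidable2patterns}.

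The main technical obstacle is matching the stated threshold $n \geq \eps^{-16t}$. A textbook application of dependent random choice would give a bound of the shape $n \geq \eps^{-ct}$ with a larger unspecified constant $c$; bringing $c$ down to $16$ requires a careful quantitative DRC together with tight control of the Ramsey step, in particular using the bound $R(t,t) \leq 4^t$ rather than any diagonal-Ramsey refinement. A second subtlety is ensuring that \emph{both} colour hypotheses are genuinely used: the red-density hypothesis alone cannot force patterns in which blue is the colour forming $K_{t,t}$ or $K_t$, so the branching in the case analysis must be driven by the blue-density hypothesis, and one must verify that in every branch the candidate pair $(S,S')$ really lands in $\mathcal{F}_t$ rather than degenerating (e.g.\ to an all-red $K_{2t}$ whose $2t$-vertex colouring may not lie in the strict list $\mathcal{F}_t$).
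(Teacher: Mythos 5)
Your proposal has a genuine gap precisely at the point you flag at the end but do not resolve. Running dependent random choice on the red graph gives a set $U$ in which every $t$-subset has a large common \emph{red} neighbourhood, but Ramsey's theorem hands you monochromatic cliques of uncontrolled colour: both $S \subseteq U$ and $S' \subseteq N_R(S)$ can come out red, in which case $S \cup S'$ is a red $K_{2t}$. A monochromatic $K_{2t}$ is not a member of $\mathcal{F}_t$ and contains no member of $\mathcal{F}_t$, since every colouring in $\mathcal{F}_t$ uses both colours. Your proposed remedy --- re-running DRC on the blue graph when ``$S$ is blue'' --- does not address this: (i) you cannot choose the colour Ramsey returns, and (ii) blue DRC is vulnerable to the symmetric degeneracy (both cliques blue). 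So this is not a subtlety to be verified after the fact; the red-only DRC simply does not force bichromaticity, and one branch of the case analysis fails outright. Note also that when $S \subseteq U$ comes out blue, you do not need to restart with blue DRC at all --- $N_R(S)$ already works --- so the branching as written both over- and under-shoots.

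The missing idea is that the DRC step must control common neighbourhoods in \emph{both} colours simultaneously: one needs a set $U$ in which every $t$-subset has a large common red neighbourhood \emph{and} a large common blue neighbourhood. Then, after extracting a monochromatic $K_t$, say red $S$, one looks for $S'$ in the common \emph{blue} neighbourhood of $S$: the bipartite graph between $S$ and $S'$ is then blue regardless, and whichever colour $S'$ is you land on $P_1[t]$ or $\overline{P_2}[t]$ (symmetrically for $S$ blue). This two-colour DRC is a more delicate argument than the single-graph version and is the actual content of the Fox--Sudakov proof being cited. For context, the present paper does not prove Theorem~\ref{thm:foxsudakov} (it is cited to \cite{FS,CM}); the sketch in Section~\ref{sec:overview} avoids your degeneracy by first pinning down a red $K_{s,s}$ \emph{and} a blue $K_{s,s}$ before applying Ramsey, and the remark after Lemma~\ref{l:r-partite-coloured} gives another route (count properly coloured $K_{1,2}$'s and apply the homogeneous blow-up lemma) that is immune to the monochromatic case by design, since the seed pattern already uses both colours --- at the cost of a weaker bound $n \geq 2^{100t/\eps}$.
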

     A probabilistic construction shows that the above is asymptotically tight up to the constant factor in the exponent. The theorem is also optimal from a structural standpoint. Namely,  $2$-edge-colourings of $K_n$ where one colour class forms two disjoint cliques of size $\lfloor n/2 \rfloor$ and $\lceil n/2 \rceil$ or one colour class forms a $K_{cn}$ (with $c \sim \sqrt{2}/2$) satisfy the hypothesis of Theorem~\ref{thm:foxsudakov} with $\eps \sim 1/2$, and thus certify that one cannot hope to find patterns which are more complex then those in Figure~\ref{fig:unavoidable2patterns}. Similarly, Theorem~\ref{thm:foxsudakov} becomes false if we delete any of the four patterns contained in $\mathcal{F}_t$. 
    \par There are numerous extensions of Theorem~\ref{thm:foxsudakov}, including multi-colour and infinite variants \cite{BLM}, variants where $K_n$ is replaced by another dense host subgraph \cite{muyesser2020turan}, and variants where $\eps$ is allowed to depend on $n$ \cite{girao2022turan, caro2021unavoidable, girao2022two}. In this paper, we are concerned with the following question, initially raised by Wesley Pegden (personal communication).
    \begin{question}\label{question:mainquestion} Let $\eps>0$. Suppose that $K_n$ is $2$-edge-coloured so that each vertex is incident to at least $\eps n$ edges in each colour. Which subgraphs must $K_n$ necessarily contain?
    \end{question} 
    We call $2$-coloured $K_n$ as in Question~\ref{question:mainquestion} \textbf{locally $\eps$-balanced}. Of course, this is a stronger hypothesis on the colouring compared to Theorem~\ref{thm:foxsudakov}. Hence, any locally $\eps$-balanced $K_n$ must contain a complete subgraph on $2t$ vertices where one colour forms a $K_{t,t}$ or a $K_t$, where $t = \Omega(\log n)$. However, this is not necessarily a complete answer to Question~\ref{question:mainquestion} as it cannot be the case that in a locally $\eps$-balanced $K_n$ one colour class consists entirely of a clique. This motivates the following question: does any locally $\eps$-balanced $K_n$ contain a complete  $2t$-vertex subgraph in which one colour class forms a $K_{t,t}$?

    \par The answer turns out to be negative for any value of $\eps\leq 1/4$ and $t\geq 2$. To see this, consider the following $2$-colouring of $K_{4k}$, where each vertex is adjacent to at least $k$ red and blue edges, illustrated in Figure~\ref{fig:allthegraphs} (labelled $P_3$). Partition $V(K_{4k})$ into four equal-sized sets as $V_1,V_2,V_3$ and $V_4$. Colour all edges with both endpoints in $V_1\cup V_4$ red, colour all edges with both endpoints in $V_2\cup V_3$ blue, and colour edges between $V_1$, $V_3$, and between $V_2$, $V_4$ red, and colour the remaining edges (between $V_1$, $V_2$ and between $V_3$,$V_4$) blue. Denote by $\mathcal{P}_k$ the resulting $2$-coloured $K_{4k}$ (so the number of vertices is $n=4k$). It is easy to see that $\mathcal{P}_k$ contains no complete subgraph with $4$ vertices in which one colour class form two disjoint cliques of size $2$ each. One can also check that $\mathcal{P}_k$ is locally $1/4$-balanced.
    
    \par Perhaps surprisingly, $\mathcal{P}_k$ is an optimal construction in the following sense. For any $\eps>0$, a locally $(1/4+\eps)$-balanced $K_n$ must contain \textit{large} complete subgraphs where one colour class forms a complete bipartite graph. Our first result makes this precise, thereby answering Question~\ref{question:mainquestion} when $\eps>1/4$.
    
    \begin{theorem}\label{thm:quarterbalanced}
        Let $G$ be a $2$-coloured locally $(1/4+\eps)$-balanced $K_n$, and suppose $n\geq 2^{t \cdot 2^{-C\log(1/\eps)^8}}$ for some absolute constant $C$. Then, $G$ contains a complete subgraph on $2t$ vertices where one colour class forms a $K_{t,t}$. 
    \end{theorem}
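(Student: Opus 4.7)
The target pattern asks for disjoint sets $A, B$ of size $t$ such that one colour class forms exactly a $K_{t,t}$ between them (so the other colour forms a $K_t$ on each of $A$ and $B$). Up to swapping the two colours, there are two sub-cases (red between blue cliques, or blue between red cliques); I plan to find either.

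First I would apply Theorem~\ref{thm:foxsudakov} with a parameter $s \ge t$ chosen large enough (depending on $\eps$). Two of the four patterns in $\mathcal{F}_s$ are precisely the target structure, in which case we are finished. So assume that the pattern found is of the ``one colour is $K_s$'' type, for example a red clique on a set $A_0$ together with a blue clique on $B_0$ and all $A_0$-$B_0$ edges blue. The plan for handling this is to upgrade the pattern iteratively using the local balance. Each vertex of $A_0$ has at least $(1/4+\eps)n$ blue neighbours, all of them outside $A_0$, so a dependent random choice over size-$t$ sub-cliques $A_0' \subseteq A_0$ produces a set $W \subseteq V$ of common blue neighbours of $A_0'$ of size at least $n(1/4+\eps)^t$. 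Inside $W$ we restart: the induced colouring still enjoys a (weakened) local balance, and we re-apply Theorem~\ref{thm:foxsudakov}, looping until we obtain the target pattern.

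The loop must terminate because no locally $(1/4+\eps)$-balanced $2$-colouring can be rigidly $\mathcal{P}_k$-like. The cleanest incarnation of this obstruction is the two-part version: there is no partition $V = B^* \cup R^*$ with $B^*$ an entirely blue clique and $R^*$ an entirely red clique. Indeed, counting the blue edges between $B^*$ and $R^*$ from both sides (each vertex of $R^*$ has $\ge (1/4+\eps)n$ blue edges, all going to $B^*$; each vertex of $B^*$ has $\ge (1/4+\eps)n$ red edges, all going to $R^*$) gives
\[
    \tfrac14 n^2 + \eps n^2 \;\le\; (1/4+\eps) n \cdot (|B^*|+|R^*|) \;\le\; |B^*|\cdot|R^*| \;\le\; \tfrac14 n^2,
\]
a contradiction. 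A more intricate counting, following the same philosophy, rules out the genuine four-partite $\mathcal{P}_k$-like colouring (the unique extremal configuration at $\eps=0$) and so forces the loop to hit the target within finitely many rounds.

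The main obstacle is quantitative: one has to control the size and the local balance of the nested sets $W$ at each level so that the final requirement on $n$ matches the stated $n \geq 2^{t\cdot 2^{-C \log(1/\eps)^8}}$. Each iteration invokes Theorem~\ref{thm:foxsudakov} on a restricted subgraph and hence pays an exponential-in-$t$ factor in size, and also allows the effective balance parameter to drop. Ensuring that the balance degrades only mildly per level (so that the recursion depth remains polylogarithmic in $1/\eps$) rather than catastrophically (which would give a tower-type bound) is the central technical challenge; I would expect it to require a partite or blow-up framework that localises the damage to a controlled fraction of vertices at each iteration.
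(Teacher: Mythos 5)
Your approach differs fundamentally from the paper's: the paper never iterates. Instead, it proves that any locally $(1/4+\eps)$-balanced colouring contains a positive density of copies of $C_4$ or $\overline{C_4}$ (Proposition~\ref{prop:many-c4}), then invokes a Nikiforov-type blow-up lemma (Lemma~\ref{l:r-partite-coloured}) to extract a homogeneous $t$-blow-up of $C_4$, which automatically contains a $t$-blow-up of $P_1$ or $\overline{P_1}$. Your ``no split partition'' counting argument is genuinely the right structural insight --- it is exactly the paper's Proposition~\ref{prop:optimize} --- and the paper does use it, but to prove the density lower bound, not to force termination of an iteration.

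The iterative scheme you propose has a concrete gap that I do not see how to repair: the common blue neighbourhood $W$ of the sub-clique $A_0'$ does \emph{not} inherit any local balance. A vertex $w \in W$ still has $\ge (1/4+\eps)n$ red neighbours in $G$, but $|W|$ is already of order $(1/4+\eps)^t n$, a tiny fraction of $n$, so essentially all of $w$'s red neighbours may lie outside $W$. Then $G[W]$ can be nearly monochromatic blue near $w$, and ``re-apply Theorem~\ref{thm:foxsudakov} inside $W$'' has no balance hypothesis to work with. This is exactly the point the paper flags when explaining why dependent random choice alone is not enough here: DRC is quantitatively powerful but does not propagate the structural (locally balanced) assumption into the neighbourhood sets it produces. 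The second gap is the termination claim: your counting rules out a \emph{global} split of $V$ into a red clique and a blue clique, but the iteration lives inside nested, ever-smaller vertex subsets, and you give no mechanism turning the global obstruction into a bound on the number of rounds. You flag this yourself as ``the central technical challenge''; as written it is a hope rather than an argument. The paper's route --- prove a supersaturation statement for $C_4/\overline{C_4}$ using the split-closeness characterisation (Lemma~\ref{lem:maintech}) plus the counting obstruction (Proposition~\ref{prop:optimize}), then hand the job of producing a large blow-up to Lemma~\ref{l:r-partite-coloured} --- sidesteps both issues entirely, at the price of the weaker $n \ge 2^{t \cdot 2^{-C\log(1/\eps)^8}}$ dependence coming from the blow-up lemma.
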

    \par The earlier construction ($\mathcal{P}_k$) shows that Theorem~\ref{thm:quarterbalanced} does not hold when $1/4+\eps$ is replaced by $1/4$. Moreover, $n$ has to be exponentially large in $t$ for the conclusion to hold (e.g.~by considering a random colouring), but we believe that the dependence of $n$ on $\eps$ should be far from optimal. We discuss this further in Section~\ref{sec:concludingremarks}.
    \input{tikz-patterns}
    \par We now turn our attention to answering Question~\ref{question:mainquestion} when $\eps\leq 1/4$. In light of the earlier construction ($\mathcal{P}_k$), it is not clear at all if anything interesting can be said about this case. One might guess there exists a $2$-colouring of $K_n$, say $\mathcal{P}'_k$, that is locally $1/8$-balanced, but $\mathcal{P}'_k$ does not contain a complete subgraph isomorphic to $\mathcal{P}_2$, or a complete graph on $4$ vertices where one colour class forms two disjoint cliques of size $2$. And perhaps, there exists another colouring which is locally $1/16$-balanced, which does not contain $\mathcal{P}_2'$ or any of the former patterns. Hence, maybe, the answer to Question~\ref{question:mainquestion} is a family of patterns $\mathcal{F}_\eps$ which increases with $\eps^{-1}$. Indeed, Theorem~\ref{thm:quarterbalanced} could feasibly be interpreted as evidence towards such a phenomenon. Our next result rules out any such possibility in a strong sense, demonstrating that the answer to Question~\ref{question:mainquestion} depends only on whether $\eps\leq 1/4$ or not.
    \begin{theorem}\label{thm:anybalanced}
         Let $G$ be a $2$-coloured locally $\eps$-balanced $K_n$, and suppose $n \geq 2^{C t/\eps^{16}}$ for some absolute constant $C$. Then, $G$ contains a complete subgraph on $2t$ vertices where one colour class forms a $K_{t,t}$, or a complete subgraph isomorphic to a $\mathcal{P}_{t}$.
    \end{theorem}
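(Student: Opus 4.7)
My plan is to reduce the problem to two Ramsey-type sub-problems linked by Theorem~\ref{thm:foxsudakov}, which generates the initial cliques.

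Step 1 (finding cliques of both colours). I would apply Theorem~\ref{thm:foxsudakov} with a parameter $s$ chosen sufficiently large in terms of $t$ and $1/\eps$. This yields either a $K_{s,s}$-configuration (giving the first conclusion for $s \geq t$) or a monochromatic clique, say a red $K_s$ on a set $R$. I then iterate on $V \setminus R$, which is still locally $\Omega(\eps)$-balanced provided $|R| \ll \eps n$, aiming to produce a blue clique of comparable size. If I succeed, I set $B$ to be this blue clique and move to Step 2. If I keep producing disjoint red cliques $R_1, R_2, \dots$ instead, then for any two of them the Kővári--Sós--Turán theorem applied to the bipartite $2$-colouring between $R_i$ and $R_j$ gives a monochromatic $K_{t,t}$: a \emph{blue} $K_{t,t}$ combined with the red cliques on each side immediately yields the first conclusion (two red $K_t$'s joined by a blue $K_{t,t}$); a \emph{red} $K_{t,t}$ lets me enlarge the red clique. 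Since local balance prohibits any monochromatic clique of size exceeding $(1-\eps) n$, the enlargement process terminates. In the extremal case, once the red clique $R^*$ has size $\geq (1-\eps)n$, every vertex outside $R^*$ is forced to have all-blue edges to $R^*$ and the complement must itself induce a red clique, producing the first conclusion on $2t$ vertices picked from each side.

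Step 2 (bipartite analysis of $(R, B)$). Given disjoint monochromatic cliques $R$ (red) and $B$ (blue), each of size $N = N(t,\eps)$, I analyze the $2$-coloured $K_{R,B}$. To obtain $\mathcal{P}_t$ it suffices to find partitions $R = V_1 \cup V_4$ and $B = V_2 \cup V_3$ into halves of size $t$ such that the four sub-bipartites follow the specific $C_4$-blow-up pattern: $V_1 V_2$ and $V_3 V_4$ blue, $V_1 V_3$ and $V_2 V_4$ red. I would establish a bipartite Ramsey lemma stating that any $2$-coloured $K_{N,N}$ with $N$ sufficiently large contains either a monochromatic $K_{t,t}$ that—combined with the clique structure on $R$, $B$, and potentially an external clique from $V \setminus (R \cup B)$—produces the first conclusion, or the required $C_4$-blow-up pattern giving $\mathcal{P}_t$. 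The proof proceeds by iterated applications of KST to peel off monochromatic $K_{t,t}$'s within sub-bipartites, then pigeonholing over the $2^4 = 16$ possible colour patterns of the four sub-bipartites to isolate either the $C_4$-pattern or a configuration that can be converted to the first conclusion.

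The main obstacle lies at the end of Step 2: when the bipartite pattern on $(R,B)$ is neither the $C_4$-blow-up nor yields conclusion (a) by direct inspection, one must exploit vertices outside $R \cup B$ to find an additional monochromatic clique that pairs with a sub-clique of $R$ (or $B$) via all-blue (resp.\ all-red) bipartite edges. A useful auxiliary tool here is Theorem~\ref{thm:quarterbalanced}, which may apply to an appropriate induced subgraph that is locally $(1/4 + \eps')$-balanced for some $\eps' > 0$ whenever $\mathcal{P}_t$ cannot be found. Finally, to match the claimed bound $n \geq 2^{Ct/\eps^{16}}$, the parameters in the KST and dependent random choice steps must be tracked carefully to avoid compounding exponents; in particular, Step 1 should be executed so that the iteration length and clique sizes scale polynomially in $1/\eps$ rather than exponentially.
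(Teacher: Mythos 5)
Your proposal takes a fundamentally different route from the paper, and unfortunately it does not go through. The paper does \emph{not} iterate Ramsey/K\H{o}vari--S\'os--Tur\'an to manufacture cliques and then analyse the bipartite graph between them. Instead, it proves a \emph{counting} statement, Proposition~\ref{prop:many-p3c4}: any locally $\eps$-balanced two-coloured $K_n$ contains $\Omega(\eps^4 n^4)$ copies of $\ptm$, $C_4$ or $\overline{C_4}$. This is obtained by a random bipartition and the observation (Proposition~\ref{prop:cute} / Lemma~\ref{lem:alternatingmatchings}) that a bipartite colouring with a one-sided red/blue minimum-degree condition forces many \emph{alternating} $4$-cycles. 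The count is then converted into a large homogeneous blow-up of one of these four-vertex patterns via the Nikiforov-style Lemma~\ref{l:r-partite-coloured}, and a short case check on the vertex colours of the blow-up gives $P_1[t]$, $\overline{P_1}[t]$, or $P_3[t]$.

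The gap in your argument is concentrated in Step~2 and is, in my view, not patchable along the lines you sketch. The local $\eps$-balance hypothesis is a property of the full $K_n$; it says nothing about the induced colouring on a set of $N \approx \log n$ vertices. Consequently, once you have extracted a red $K_N$ on $R$ and a blue $K_N$ on $B$, the bipartite graph $(R,B)$ is entirely unconstrained: it may, for example, be completely blue. In that case there is no $C_4$-blow-up pattern inside $(R,B)$ at all, and contrary to what you assert, a monochromatic $K_{t,t}$ between $R$ and $B$ does \emph{not} produce the first conclusion: taking $t$ vertices from each side gives a red $K_t$, a blue $K_t$ and a monochromatic bipartite graph, which is the $K_t$-type pattern of $\mathcal{F}_t$, not a $K_{2t}$ in which one colour class forms a $K_{t,t}$ (that would require \emph{both} halves to be cliques of the same colour). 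You acknowledge that one must then exploit vertices outside $R\cup B$, but this is exactly where the hard work lies, and no mechanism is given: local balance does not descend to the subgraph you would want to apply Theorem~\ref{thm:quarterbalanced} to, and there is no a priori reason a suitable $(1/4+\eps')$-balanced induced piece exists or is findable. This is precisely the obstruction the paper flags in Section~\ref{sec:overview}: the iterated Ramsey/KST approach that suffices for Theorem~\ref{thm:foxsudakov} ``is too weak to achieve this, since it does not make use of the assumption that our colouring is locally-balanced.'' Step~1, by contrast, is basically fine (and even a little wasteful --- the $K_t$-pattern of $\mathcal{F}_t$ already hands you disjoint red and blue cliques simultaneously), but it does not rescue the argument, since the core difficulty is that clique extraction destroys all balance information.
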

    Obtaining a better quantitative dependence between $n$ and $\eps$ remains an intriguing challenge. We discuss this further in Section~\ref{sec:concludingremarks}. We remark also that the assertion in the abstract is a consequence of the above result.

    In \cite{BLM}, Theorem~\ref{thm:foxsudakov} was generalised to $r$-edge-colourings of $K_n$, and we refer the reader to that paper to see what kind of patterns can be guaranteed in the multicolour version of the problem. It is also natural to investigate a locally balanced version of the multicolour problem.  The situation here is unexpectedly more difficult, as already for locally $\eps$-balanced $3$-colourings there is no straightforward analogue of Theorem~\ref{thm:anybalanced}. We formalise this negative statement in Section~\ref{sec:morecolours}, and obtain some complementary positive results. 

   \par \textbf{Organisation of the paper.} In Section~\ref{sec:blow-ups}, we introduce a general tool to find patterns (such as those depicted in Figure~\ref{fig:3-colours}) efficiently in edge-coloured complete graphs. Our main tool is a technique of Nikiforov which uses subgraph count estimates to find large blow-ups of small subgraphs. This is in contrast to many of the related results in this area for which the main tool is the dependent random choice method. In Section~\ref{sec:overview}, we compare these two approaches and give an overview of the proofs of Theorems~\ref{thm:quarterbalanced} and Theorems~\ref{thm:anybalanced}. Section~\ref{sec:proofs} contains formal proofs of both of these results. In Section~\ref{sec:morecolours}, we treat the multicolour version of the problem.
    
    \section{Preliminaries and proof outline}\label{sec:prelims}
    \subsection{Notation}\label{sec:notation}
    In this subsection, in preparation to give an overview of the proofs, we introduce and recall some notation. An $r$-edge-coloured (or just an $r$-coloured) graph is a graph together with a labelled partition of its edge-set into $r$ parts. We view the parts of this partition as being labelled by different colours. We say that one coloured graph $G$ contains another coloured graph $H$ if there exists an injection $\phi\colon V(H)\to V(G)$ such that $(a,b)\in E(H)$ implies that $(\phi(a),\phi(b))\in E(G)$ and $(\phi(a),\phi(b))$ has the same colour as $(a,b)$.
    \par  We say that a coloured graph $G$ is a \textbf{homogeneous $t$-blow-up} of another coloured graph $H$ if $V(G)$ can be partitioned into $t$-sized sets $V_1,\cdots, V_{|V(H)|}$, each $G[V_i]$ is a monochromatic clique (in some colour), and for each $(i,j)\in E(H)$ of colour $c$, $G[V_i,V_j]$ is a complete bipartite graph where each edge gets colour $c$.
    \par An $r$\textbf{-totally-coloured} graph is a graph whose vertices as well as its edges are given an $r$-colouring. A coloured graph $G$ is a $t$\textbf{-blow-up of a totally-coloured graph} $H$ if $G$ is a $t$-blow-up of $H$ with respect to the edge-colouring of $H$, and $G[V_i]$ is a monochromatic clique in the same colour as the vertex $i\in H$. 
   
    \par  If $G$ is $2$-coloured, $\overline{G}$ is used to denote the coloured graph with the two colours interchanged.
    
    We say that an $r$-coloured $G$ is \textbf{locally $\eps$-balanced} if each vertex of $G$ is incident to at least $\eps|V(G)|$ edges in each of the $r$ colours. We say that an $r$-coloured graph $G$ is \textbf{globally $\eps$-balanced} if each colour class has size at least $\eps \binom{n}{2}$.

    \par For the convenience of the reader, in the following paragraph we collect every coloured and totally coloured graph we make reference to throughout the paper. For further clarity, in Figure~\ref{fig:allthegraphs}, we provide an illustration of each of these graphs.
    
    \par Let $P_1$ be the totally-coloured $K_2$ with vertices receiving colour red, and the unique edge receiving colour blue. Note that the $t$-blow-up of $P_1$ is a blue induced $K_{t,t}$. Let $P_2$ be the totally-coloured $K_2$ with one vertex receiving colour red, the other receiving colour blue, and the edge receiving colour red. Let $P_3$ be the totally-coloured $K_4$ with edges $(1,2),(2,3), (3,4)$ and vertices $2$ and $3$ coloured blue, and all other edges and vertices coloured red. Note that a $t$-blow-up of $P_3$ is locally $1/4$-balanced. Moreover, let $\ptm$ be the (not totally) coloured graph obtained from $P_3$ by discarding the vertex colours. Let $C_4$ be a $2$-edge-coloured $K_4$ with the red edges forming a $4$-cycle. Let $M_1$ be the properly $2$-edge-coloured $K_{2,2}$.


    With our notation, the aforementioned theorems can be stated as follows.
    
    \begin{reptheorem}{thm:foxsudakov}[Fox-Sudakov, Cutler-Mont\'agh]
    Suppose $n\geq(1/\eps)^{16t}$. Then, any globally $\eps$-balanced $2$-coloured $K_n$ contains a $t$-blow-up of one of $P_1$, $P_2$, $\overline{P_1}$, $\overline{P_2}$.
    \end{reptheorem}

    \begin{reptheorem}{thm:quarterbalanced}
         Suppose $n\geq 2^{t\cdot 2^{-C\log(1/\eps)^8}}$ for a sufficiently large absolute constant $C$. Then, any locally $(1/4+\eps)$-balanced $2$-coloured $K_n$ contains a $t$-blow-up of $P_1$ or $\overline{P_1}$.
    \end{reptheorem}
    
    \begin{reptheorem}{thm:anybalanced} 
       Suppose $n\geq 2^{C t/\eps^{16}}$ for a sufficiently large absolute constant $C$. Then, any locally $\eps$-balanced $2$-coloured $K_n$ contains a $t$-blow-up of $P_1$, $\overline{P_1}$, $P_3$.
    \end{reptheorem}
    Note that $\overline{P_3}$ is not included in the above list as $\overline{P_3}=P_3$.
 \subsection{Proof overview} \label{sec:overview}
 Although our proofs are short, the method is quite different from the other papers in the area. In this section, we aim to motivate our approach and give heuristic explanations for why Theorems~\ref{thm:quarterbalanced} and \ref{thm:anybalanced} are true, at least qualitatively (i.e. with some finite $n=n(t,\eps)$). We begin with a brief discussion of Theorem~\ref{thm:foxsudakov} in order to compare the different approaches. It is rather straightforward to obtain a proof of Theorem~\ref{thm:foxsudakov} if one is only concerned with obtaining $t$-blow-ups where $t$ tends to infinity with $n$. Indeed, applying the well-known K\H{o}vari-S\'os-Tur\'an Theorem to both red and blue colour classes, we can obtain one red and one blue $K_{s,s}$, say $T_1$ and $T_2$, respectively, where $s\sim \log n$ (we can ensure that $T_1$ and $T_2$ are vertex-disjoint). Apply Ramsey's theorem to the four parts coming from $T_1$ and $T_2$, and delete all vertices outside of the monochromatic cliques guaranteed by this application. Similarly, for each pair of the $4$ parts, in turn, apply the K\H{o}vari-S\'os-Tur\'an theorem to the majority colour class in the bipartite graph between the pair of parts to guarantee a monochromatic complete bipartite subgraph, at each iteration reducing the size of the parts logarithmically. This produces a $t$-blow-up of a totally-coloured $K_4$ (where $t\to \infty$ as $n\to\infty$) where we do not have precise control over the colouring; however, we know that the colouring is not monochromatic (there exists at least one blue and one red edge). It is easy to see that this implies that the totally-coloured $K_4$ must contain one of $P_1$, $P_2$, $\overline{P_1}$, $\overline{P_2}$, as desired.
 
 \par In \cite{FS}, the dependent random choice method (see, e.g.~\cite{fs11}) is employed to obtain optimal bounds for Theorem~\ref{thm:foxsudakov}, eliminating the need for nested applications of the Theorems of Ramsey and K\H{o}vari-S\'os-Tur\'an. However, the argument still boils down to finding a blow-up of a totally-coloured graph which is not monochromatic, thereby producing a structure which necessarily contains one of the desired patterns. To prove Theorems~\ref{thm:quarterbalanced} and ~\ref{thm:anybalanced}, we find blow-ups of totally-coloured graphs where each vertex is adjacent to edges of both colours. The argument given in the previous paragraph is too weak to achieve this, since it does not make use of the assumption that our colouring is locally-balanced. The  dependent random choice is quantitatively much stronger, but does not give any additional structural information. 
 
 \par Hence, we need an argument which will make use of the global structure of the colouring. Specifically, our main tool (Lemma~\ref{l:r-partite-coloured}), a strengthening of a theorem due to Nikiforov~\cite{nikiforov08}, reduces the problem to finding certain small subgraphs appearing in $G$ with a positive density. Using Lemma~\ref{l:r-partite-coloured}, the statements of Theorem~\ref{thm:quarterbalanced} and \ref{thm:anybalanced} reduce to the following two statements, respectively.
     \begin{proposition} \label{prop:many-c4} There exists a constant $C_1$ such that the following holds.
        Any locally $(1/4 + \eps)$-balanced 2-coloured $K_n$ contains $2^{-C_1\log(1/\eps)^8} n^4$ copies of $C_4$ or $\overline{C_4}$.
    \end{proposition}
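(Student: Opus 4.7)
My plan is a stability argument by contradiction: suppose $G$ contains fewer than $\delta n^4$ copies of $C_4$ and $\overline{C_4}$ combined, with $\delta = 2^{-C_1\log(1/\eps)^8}$. Since the extremal colouring $\mathcal{P}_k$ is locally $1/4$-balanced and avoids both $C_4$ and $\overline{C_4}$, the route to a contradiction is to show that a $C_4/\overline{C_4}$-poor $G$ must be structurally close to a blow-up of $P_3$, and therefore cannot be $(1/4+\eps)$-balanced.

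First I would recast the target count in codegree form. Writing $S_{uv} = N_R(u)\cap N_R(v)$ and $\widetilde{S}_{uv} = N_B(u)\cap N_B(v)$, each $C_4$ is determined by an unordered pair of disjoint blue edges whose four cross-edges are red, and symmetrically for $\overline{C_4}$; a short count gives
\[
2(\#C_4 + \#\overline{C_4}) = \sum_{uv \in E(B)} e_B(S_{uv}) + \sum_{uv \in E(R)} e_R(\widetilde{S}_{uv}).
\]
The hypothesis then says that for a typical same-coloured edge $uv$, the opposite-coloured common neighbourhood $S_{uv}$ is nearly monochromatic in the opposite colour. From this local data I would extract, around each typical vertex $v$, a four-type classification of $V\setminus\{v\}$ whose types play the roles of the parts $V_1,\ldots,V_4$ of $\mathcal{P}_k$ relative to $v$.

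The next step would be to globalise this local four-classification to an honest partition of $V(K_n)$ into four parts $V_1,V_2,V_3,V_4$ whose pairwise bipartite colourings agree with those of $\mathcal{P}_k$ up to density error $o(\eps)$. A plausible scheme is an iterative cleanup: fix a reference vertex and define candidate classes; reconcile the classes obtained from different reference vertices by majority voting; delete vertices that remain ambiguous. A bounded number of rounds, each introducing only a $\log^{O(1)}(1/\eps)$ loss, should suffice to stabilise the partition. Once such a near-$\mathcal{P}_k$ structure is in place, every vertex has red and blue degrees at most $(1/4+o(\eps))n$, contradicting $(1/4+\eps)$-balance for $\delta$ small enough.

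The hard part will be the globalisation. Translating a local "looks like $P_3$ near every vertex" statement into a genuine global four-part partition with only polylogarithmic loss is delicate: a naive implementation through Szemer\'edi's regularity lemma yields a tower-type dependence on $\eps$, far too weak for the target. The required saving must exploit the rigidity of $\mathcal{P}_k$, which has no continuous parameters beyond its four part sizes, allowing a direct combinatorial cleanup that bypasses full regularity; the $\log^8(1/\eps)$ in the exponent of $\delta$ presumably arises from controlled iteration together with union bounds over the $O(1)$ cleanup rounds.
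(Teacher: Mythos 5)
Your overall strategy — show that a $C_4/\overline{C_4}$-poor colouring must be structurally close to an extremal colouring, and that extremal colourings cannot be locally $(1/4+\eps)$-balanced — matches the paper's. But two things in the execution are off. First, the target structure is wrong: you aim to show $G$ is close to a blow-up of $P_3$ (the four-part colouring $\mathcal{P}_k$), but the correct and much simpler structure theorem is that $G$ is close to being \emph{split}, i.e.\ its vertex set can be partitioned into a red clique and a blue clique (Lemma~\ref{lem:maintech}). Every split colouring is $C_4/\overline{C_4}$-free regardless of how the bipartite graph between the two cliques is coloured, so "close to $\mathcal{P}_k$" is both false in general and stronger than needed; $\mathcal{P}_k$ is only one split colouring among many. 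Once you have "close to split," a one-paragraph counting argument (Proposition~\ref{prop:optimize}: there are at most $n^2/4$ edges between the two cliques, but each clique needs $(1/4+\eps)n$ opposite-colour crossing edges per vertex) gives the contradiction — no need to extract any four-part structure or control the cross-colouring.

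Second, the globalisation step is the actual content and you have only a hand-wave. You rightly note that full regularity gives tower-type bounds and hence cannot produce a $2^{-\mathrm{polylog}(1/\eps)}$ density of $C_4$'s, but "iterative cleanup with majority voting over $O(1)$ rounds" is not a mechanism — it is a restatement of the difficulty. The paper's solution is to invoke a result of Fox and Sudakov (Theorem~4.4 of~\cite{fox2008induced}, restated here as Lemma~\ref{lem:partitionintohombits}): if a graph has few induced copies of a fixed $k$-vertex graph $H$, then its vertex set admits an equitable partition into $2^{\mathrm{poly}(k\log(1/\eps))}$ parts, each of red density at most $\eps$ or at least $1-\eps$. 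Applied to the red graph with $H=C_4$ this directly yields the near-homogeneous parts, and a local $4$-cycle supersaturation argument shows that the bipartite graph between any two high-red-density parts is also nearly all red (else one finds many $\overline{C_4}$); merging parts of like colour finishes the split structure. This off-the-shelf quasi-polynomial structural lemma is precisely the ingredient your proposal is missing — without it, or an equivalent, your codegree identity alone does not lead anywhere, and the exponent $\log^8(1/\eps)$ you hope for has no source.
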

    \begin{proposition} \label{prop:many-p3c4}
    Any locally $\eps$-balanced $2$-coloured $K_n$ contains  $\eps^4n^4/10^5$ copies of $\ptm, C_4$ or $\overline{C_4}$.
    \end{proposition}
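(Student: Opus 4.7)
The plan is to reduce the proposition to a lower bound on the number of \emph{alternating $4$-cycles}, i.e.\ ordered tuples $(v_1,v_2,v_3,v_4)$ of distinct vertices with $v_1v_2,\, v_3v_4$ red and $v_2v_3,\, v_4v_1$ blue. A brief case analysis over the isomorphism classes of $2$-coloured $K_4$ shows that a $4$-vertex subset of $V(G)$ induces a copy of $\ptm$, $C_4$ or $\overline{C_4}$ if and only if it contains an alternating $4$-cycle, and each of these three ``good'' colourings contains at most two alternating $4$-cycles. (Every other colouring of $K_4$ has either a monochromatic triangle at some vertex or a monochromatic cherry at the two sides of a proposed alternating cycle, both of which preclude an RBRB-cycle.) It therefore suffices to exhibit $\Omega(\eps^4 n^4)$ alternating $4$-cycles.

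Let $A$ and $B$ be the adjacency matrices of the red and blue subgraphs and set $M:=AB$, so that $M_{uw}=c_{RB}(u,w)$ is the number of R-B cherries $u$-$v$-$w$. Any closed walk in $K_n$ with colour pattern R-B-R-B uses four distinct vertices (otherwise some edge would carry both colours), so the number of ordered RBRB alternating $4$-cycles is exactly $\operatorname{tr}(M^2)=\sum_{u,w}c_{RB}(u,w)\,c_{BR}(u,w)$. Double-counting the red neighbours of $u$ partitioned by the colour of their edge to $w$ gives the identity $c_{RB}(u,w)-c_{BR}(u,w)=r(u)-r(w)$; combined with $4xy=(x+y)^2-(x-y)^2$ this yields
\[
4\operatorname{tr}(M^2) \;=\; \sum_{u,w}h(u,w)^2 \;-\; \sum_{u,w}\bigl(r(u)-r(w)\bigr)^2,
\]
where $h(u,w):=c_{RB}(u,w)+c_{BR}(u,w)$ counts the vertices $v$ for which $uv$ and $vw$ are oppositely coloured. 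Since $r(v)b(v)\ge \eps(1-\eps)n^2$ for every $v$ (the function $t\mapsto t(n-1-t)$ is minimised on $[\eps n,(1-\eps)n]$ at the extremes), we get $\sum_{u,w}h(u,w)=2\sum_v r(v)b(v)\ge 2\eps(1-\eps)n^3$ and Cauchy-Schwarz gives $\sum_{u,w}h(u,w)^2 \gtrsim \eps^2 n^4$.

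The main obstacle is to control the ``variance correction'' $\sum_{u,w}(r(u)-r(w))^2$, which can be of order $n^4$ when the red-degrees are very uneven; the pointwise inequality $h(u,w)\ge|r(u)-r(w)|$ (immediate from the non-negativity of the cherry counts) only delivers the trivial bound $\operatorname{tr}(M^2)\ge 0$. To upgrade this to $\operatorname{tr}(M^2)\gtrsim \eps^4 n^4$, I would argue that on an $\Omega(1)$-fraction of ordered pairs $(u,w)$ both $c_{RB}(u,w)$ and $c_{BR}(u,w)$ are of order $\eps^2 n$, so that their product sums to $\Omega(\eps^4 n^4)$ via the factorisation $4c_{RB}(u,w)c_{BR}(u,w)=h(u,w)^2-(r(u)-r(w))^2$. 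Making this dichotomy quantitative — for instance by partitioning vertices by $r(v)$ and handling the inter-class and intra-class contributions separately, using that vertices in opposite classes force $h$ to exceed $|r-r|$ by $\Omega(\eps^2 n)$ — is the delicate part of the argument, and the generous constant $10^{-5}$ in the statement is what absorbs the losses from this balancing step.
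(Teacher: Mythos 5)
Your opening reduction is correct and matches the paper's: a $4$-set of vertices spans $\ptm$, $C_4$ or $\overline{C_4}$ precisely when it contains an alternating red--blue $4$-cycle, so it suffices to produce $\Omega(\eps^4 n^4)$ such cycles. The algebraic reformulation via $\operatorname{tr}(M^2)$ is also fine, as are the identities $c_{RB}(u,w)-c_{BR}(u,w)=r(u)-r(w)$ and $\sum_{u,w}h(u,w)=2\sum_v r(v)b(v)\geq 2\eps(1-\eps)n^3$. But the argument stops exactly where the real work begins. You correctly observe that $\sum_{u,w}(r(u)-r(w))^2$ can be of order $n^4$ when the red-degree sequence is bimodal (e.g.\ half the vertices with red degree $\eps n$, half with $(1-\eps)n$), which is entirely compatible with local $\eps$-balancedness, so the Cauchy--Schwarz lower bound on $\sum h^2$ is wiped out. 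The proposed repair --- that $\Omega(1)$ of the ordered pairs $(u,w)$ have both $c_{RB}(u,w)$ and $c_{BR}(u,w)$ of order $\eps^2 n$, or that pairs in different degree-classes have $h(u,w)-|r(u)-r(w)|=\Omega(\eps^2 n)$ --- is asserted, not proved, and is not obviously true pairwise: for a single pair with $r(u)<r(w)$ nothing prevents $N_R(u)\subseteq N_R(w)$, which forces $c_{RB}(u,w)c_{BR}(u,w)=0$ and gives zero slack. Any correct version of the claim would have to be an averaging statement, and you explicitly defer this ("the delicate part of the argument"). As written, the proof does not establish the proposition.

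For comparison, the paper sidesteps this variance problem by first passing (via a Chernoff random-split) to a bipartite $2$-colouring $(X,Y)$ where only $\delta_R(X)\geq\eps n/2$ and $\delta_B(Y)\geq\eps n/2$ are retained, and then running a purely combinatorial argument (Lemma~\ref{lem:alternatingmatchings}) anchored at a vertex $v\in X$ of \emph{minimum} red degree. Taking $S=N_R(v)$, the blue edges inside $(X\setminus\{v\})\times S$ are abundant, one extracts a bipartite subgraph of high minimum blue degree, and the minimality of $v$'s red degree forces every $c$ in this subgraph to have many red neighbours \emph{outside} $S$; each such choice closes an alternating $4$-cycle through $v$. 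This is a robust version of Proposition~\ref{prop:cute} and directly controls, via minimality, exactly the degree-imbalance that your spectral decomposition leaves untamed. If you want to pursue your route, that is where the comparable idea must enter; without it there is a genuine gap.
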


    A natural tool for proving statements such as Proposition~\ref{prop:many-c4} and \ref{prop:many-p3c4}  is the Szemer\'edi Regularity Lemma and closely related removal lemmas (see, e.g.,~\cite{cf13}). For the sake of obtaining better bounds and more cogent proofs, we do not use the regularity method. However, the regularity method does provide a shorter proof of a quantitatively weaker version of Proposition~\ref{prop:many-c4}, and thus Theorem~\ref{thm:quarterbalanced}. Indeed, using the Regularity Lemma, it is not hard to show that the vertex set of a $2$-coloured $K_n$ with a vanishing density of both $C_4$ and $\overline{C}_4$ can essentially (that is, after recolouring $o(n^2)$ edges)), be partitioned into a red and a blue clique. Such a $2$-coloured $K_n$ can at best be locally $(1/4+o(1))$-balanced, by an easy optimisation argument (see Proposition~\ref{prop:optimize}), implying Proposition~\ref{prop:many-p3c4}.

    Proposition~\ref{prop:many-p3c4} is, however, more subtle, since even finding a single copy of the desired small subgraphs is a non-obvious extremal problem. To give some intuition on why the statement holds, let us prove an idealised version of it.
    \begin{proposition}\label{prop:cute}
 Let $(A,B)$ be a $2$-coloured complete bipartite graph with $|A|,|B|>2$ so that each vertex of $A$ has at least one blue neighbour, and each vertex of $B$ has at least one red neighbour. Then, $(A,B)$ contains a cycle of length $4$ whose edges are alternating red and blue.
 \end{proposition}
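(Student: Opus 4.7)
The plan is to argue by contradiction. Assume $(A,B)$ has no alternating $4$-cycle, and for each $a \in A$ let $R(a) = \{b \in B : ab \text{ is red}\}$ denote its red neighbourhood in $B$.

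My first observation would be that the absence of an alternating $C_4$ forces the family $\{R(a) : a \in A\}$ to be linearly ordered by inclusion. Indeed, if $R(a_1)$ and $R(a_2)$ were incomparable, one could pick $b_1 \in R(a_1) \setminus R(a_2)$ and $b_2 \in R(a_2) \setminus R(a_1)$; then $\{a_1, b_1, a_2, b_2\}$ spans a $C_4$ with red edges $a_1 b_1, a_2 b_2$ and blue edges $a_1 b_2, a_2 b_1$ --- exactly an alternating $4$-cycle.

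Given this chain structure, the finiteness of $A$ produces a vertex $a^{\ast} \in A$ whose red neighbourhood is maximal, so $R(a^{\ast}) \supseteq R(a)$ for every $a \in A$. The hypothesis that each $b \in B$ has a red neighbour gives $\bigcup_{a \in A} R(a) = B$, so by maximality $R(a^{\ast}) = B$. But then $a^{\ast}$ has no blue neighbour in $B$, contradicting the hypothesis on vertices of $A$.

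I do not anticipate a serious obstacle: the core of the argument is just the equivalence between ``having an alternating $C_4$'' and ``having two red neighbourhoods that cross'', after which everything follows from the fact that a chain of proper subsets of $B$ cannot cover $B$. The assumption $|A|, |B| > 2$ is in fact stronger than required --- the argument works as soon as both sides are nonempty (and is only nonvacuous when they each have at least two vertices).
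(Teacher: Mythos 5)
Your proof is correct and is essentially the same argument as the paper's, just in dual form: the paper considers blue neighbourhoods in $A$ and uses the minimal set in the chain (which is nonempty, giving a vertex of $B$ with no red neighbour), while you consider red neighbourhoods and use the maximal set in the chain (which must be all of $B$, giving a vertex of $A$ with no blue neighbour). Your observation that nonemptiness of $A$ and $B$ suffices is also correct; the paper's stated threshold $|A|,|B|>2$ and its opening ``WLOG $|A|>|B|$'' are both unnecessary for the argument.
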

 \begin{proof}
     Suppose without loss of generality that $|A|>|B|$. Let $v,w\in A$. Suppose there exists no alternating red and blue $4$-cycle. Observe that the blue neighbourhood of $v$ must be contained in that of $w$, or vice versa, otherwise we immediately obtain an alternating $4$-cycle. Hence, the collection of blue neighbourhoods of vertices in $A$ forms a chain where the smallest subset has at least one element, say $q\in B$, by assumption. Hence, $q$ is in the blue neighbourhood of every vertex of $A$, meaning $q$ has no red neighbours, a contradiction. 
 \end{proof}
    
    To clarify the connection between Proposition~\ref{prop:cute} and Proposition~\ref{prop:many-p3c4}, notice that completing a colouring of an \textit{alternating} red-blue 4-cycle to an edge-colouring of $K_4$ yields a $C_4, \overline{C_4}$ or a $\ptm$.

    Finally, we state our main technical tool, which allows us to deduce Theorems~\ref{thm:quarterbalanced} and ~\ref{thm:anybalanced} from the aforementioned Propositions. Nikiforov~\cite{nikiforov08} showed that any graph with a positive density of $K_\ell$-copies contains a \textit{large} complete $\ell$-partite subgraph. We strengthen this statement to find \textit{homogeneous} blow-ups in $r$-coloured graphs. Recall that our definition of a homogeneous blow-up in a coloured graph requires that the parts of the blow-up induce monochromatic cliques, but the colours of the cliques are not specified.
   
    \begin{lemma} \label{l:r-partite-coloured} 
    	Let $H$ be an $\ell$-vertex $r$-coloured graph. Let $G$ be an $n$-vertex $r$-coloured graph $G$ containing at least $c n^\ell$ copies of $H$. Then  $G$ contains a homogeneous $t$-blow-up of $H$ with $ t \geq \min\left\{ \frac{c}{2\ell}, \frac{1}{2 r \log r} \right\} ^\ell \log n$.
\end{lemma}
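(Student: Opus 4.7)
Set $\alpha = \min\{c/(2\ell),\, 1/(2r\log r)\}$, so the goal is a homogeneous $t$-blow-up of $H$ with $t \geq \alpha^\ell \log n$. My plan is to proceed by induction on $\ell$, combining Nikiforov's subgraph-counting/dependent-random-choice technique with the multi-colour Ramsey theorem to handle the two kinds of shrinkage in tandem.

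The base case $\ell = 1$ only asks for a monochromatic clique of size $t$ in the $r$-coloured $K_n$, which follows from the bound $R_r(t) \leq r^{O(rt)}$ and gives a clique of size at least $\log n/(2r\log r) \geq \alpha\log n$.

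In the inductive step I fix a vertex of $H$, say $1$, and build the corresponding part $V_1$ first. To guarantee that $V_1$ is simultaneously monochromatic \emph{and} extension-rich, I first extract (by Ramsey) a large monochromatic clique $C \subseteq V(G)$, then run a dependent-random-choice argument \emph{inside} $C$: letting $f(x_2,\ldots,x_\ell)$ denote the fraction of $v\in C$ for which $(v,x_2,\ldots,x_\ell)$ is a copy of $H$, Jensen's inequality applied to the quantity $\sum_{(x_2,\ldots,x_\ell)} f(x_2,\ldots,x_\ell)^t$ shows that some $V_1 \subseteq C$ of size $t$ has a common extension set of size at least $c^t n^{\ell-1}$. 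Since $V_1 \subseteq C$, it is automatically monochromatic.

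With $V_1$ fixed, I split the remaining vertices by the colour of their edges to $V_1$: for each prospective future role $j \in \{2,\ldots,\ell\}$ I keep only those vertices whose edges to all of $V_1$ have the colour that $H$ prescribes between vertices $1$ and $j$. Averaging (together with the lower bound on the extension set) shows that many copies of the link $H \setminus \{1\}$ survive within the resulting substructure on $\ell-1$ classes, and I then invoke the inductive hypothesis on this substructure to obtain $V_2,\ldots,V_\ell$. Each of the $\ell$ induction layers contributes a multiplicative factor of $\alpha$ to the final part size: the averaging/dependent-random-choice step contributes $c/(2\ell)$, while the Ramsey extraction contributes $1/(2r\log r)$, whichever is binding at that step. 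Multiplying over $\ell$ layers yields $t \geq \alpha^\ell \log n$.

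The main obstacle is synchronising the two subroutines: dependent random choice does not naturally produce cliques, while Ramsey's theorem produces cliques with no regard for the subgraph-count structure. Running Ramsey \emph{first} and then applying dependent random choice \emph{inside} the resulting monochromatic clique resolves this tension, since any subset of a monochromatic clique is itself monochromatic. The delicate point in the calculation is verifying that the extension set produced by the dependent-random-choice step is still dense enough to support the next round of Ramsey, so that after $\ell$ rounds neither subroutine has cost more than a factor of $\alpha$ per step.
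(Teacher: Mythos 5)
There is a genuine gap, and it sits precisely where you tried to resolve the tension you correctly identified. First, ``run Ramsey first, then dependent random choice inside $C$'' does not make the two subroutines compatible: the monochromatic clique $C$ produced by Ramsey has size only $O(\log n)$ and is found with no regard for the copies of $H$, so nothing ensures that any copies of $H$ have their first vertex in $C$. The mean of your function $f$ over $(\ell-1)$-tuples could be $0$. (This particular issue could be patched by restricting Ramsey to the $\Omega(cn)$ vertices each lying in $\geq \tfrac{c}{2}n^{\ell-1}$ copies of $H$, but that patch is not in your write-up.) Second, and more fatally, the induction does not close. Your Jensen step yields a common extension set of size $\geq c^{t}n^{\ell-1}$ with $t = \Theta(\log n)$, so the link density you can pass to the next layer is $c^{\Theta(\log n)} = n^{-\Theta(1)}$, \emph{not} a constant like $c/(2\ell)$. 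Plugging a density that decays polynomially in $n$ into the inductive hypothesis for $\ell-1$ produces parts of size $n^{-\Theta(\ell)}\log n = o(1)$, not $\Omega(\log n)$. Your accounting ``each layer contributes a multiplicative factor of $\alpha$ to the part size'' confuses a multiplicative loss in the \emph{density} with a multiplicative loss in the \emph{size}.

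The paper avoids both problems by reversing the order of operations and by using Nikiforov's K\H{o}v\'ari--S\'os--Tur\'an-type lemma (Lemma~\ref{l:stars}) instead of dependent random choice. After a minimum-degree cleaning step and the inductive call returning $\ell - 1$ monochromatic parts together with a covering matching $\mathcal{A}$ of $(\ell-1)$-tuples, one forms the bipartite graph between $\mathcal{A}$ and $V_\ell$ and applies Lemma~\ref{l:stars}: this returns $\mathcal{A}' \subset \mathcal{A}$ of size $(c/2)^{\ell}\log n$ whose common extension set $T \subset V_\ell$ has size $n^{1 - 2^{-\ell}c^{\ell - 1}}$, i.e.\ polynomial in $n$. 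Ramsey is then applied to $T$ \emph{at the end}, giving a monochromatic $S_\ell$ of size $\frac{1}{r\log r}\log|T| = \Omega(\log n)$. The counting loss lands \emph{additively in the exponent of $n$} rather than multiplicatively in the part size, which is exactly why the induction closes. Two further ingredients you would also need but do not mention: the reduction of Lemma~\ref{l:r-partite-coloured} to the $\ell$-partite hypergraph statement (Lemma~\ref{l:r-partite-hyp}) via a random balanced partition of $V(G)$, and the deletion argument producing a subhypergraph in which every $(\ell-1)$-tuple in the shadow has degree $\geq cn$.
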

    Lemma~\ref{l:r-partite-coloured} is proved in Section~\ref{sec:blow-ups}. We remark that the Lemma also gives a short proof of Theorem~\ref{thm:foxsudakov} with the stronger hypothesis that $n\geq 2^{100t/\eps}$. Indeed, an easy counting argument  implies that any globally $\eps$-balanced 2-colouring of $K_n$ contains  $\Omega(\eps n^3)$ properly coloured two-edge paths (that is, $K_{1, 2}$-copies), so  Lemma~\ref{l:r-partite-coloured} yields a homogeneous $t$-blow-up of the non-monochromatic edge-colouring of $K_{1,2}$. A simple case distinction then gives a blow-up of $P_1$, $P_2$ or one of their complements. A similar argument gives a concise proof (with a weaker dependence between $n$ and $\eps$) of Theorem 1.4 from \cite{BLM} , which is a generalisation of Theorem~\ref{thm:foxsudakov} to an arbitrary number of colours, originally proved via nested applications of the dependent random choice method.
    \section{Proofs for patterns in two-colourings} \label{sec:proofs}
    \subsection{Proofs of the main theorems}
        We now show how the main theorems follow easily from the two propositions on subgraph counts and Lemma~\ref{l:r-partite-coloured}.
    \begin{proof}[Proof of Theorem~\ref{thm:quarterbalanced}]
        By Proposition~\ref{prop:many-c4}, we may assume (without loss of generality) that $G$ contains at least $2^{-C_1\log(1/\epsilon)^8} n^4$ copies of $C_4$. By Lemma~\ref{l:r-partite-coloured} applied with $c = 2^{-C_1\log(1/\epsilon)^8}$, $G$ contains a homogeneous $2^{-C\log(1/\epsilon)^8} \log n)$-blow-up of $C_4$ for some absolute constant $C$. Note that by assumption on $n$, $2^{-C\log(1/\epsilon)^8} \log n \geq t$. Let $V_1, \ldots, V_4$ be the parts of this blow-up, and recall that $G[V_i]$ are monochromatic cliques.

        We claim that $G[V_1 \cup V_2 \cup V_3 \cup V_4]$ contains a $t$-blow-up of $P_1$ or $\overline{P_1}$. Assume the opposite. At least one vertex part has to be blue, so assume that $G[V_1]$ is blue. Then $G[V_2]$ and $G[V_4]$ are red, so they form a blow-up of $P_1$, which is a contradiction.
     \end{proof}

         \begin{proof}[Proof of Theorem~\ref{thm:anybalanced}] 
        Proposition~\ref{prop:many-p3c4} implies that $G$ contains $\eps^4 n^4/10^5$ copies of $C_4$, $\overline{C_4}$ or $\ptm$ in $G$. Let $t = \left(\eps^4 \cdot 10^{-6} \right)^4 \log n$, noting that this satisfies the claimed bound on $n(t, \eps)$. In the cases with many copies of $C_4$ or $\overline{C_4}$, the above argument from the proof of Theorem~\ref{thm:quarterbalanced} yields a $t$-blow-up of $P_1$ or $\overline{P_1}$. In the latter case, applying Lemma~\ref{l:r-partite-coloured}, we obtain that $G$ contains a homogeneous $(\eps^{16}10^{-7}\log n)$-blow-up of $\ptm$ on the vertex parts $V_1, V_2, V_3$ and $V_4$. Suppose that this structure does not contain the $t$-blow-up of a $P_1$ or $\overline{P}_1$, otherwise we are done. Suppose that $G[V_1]$ is blue. Then $G[V_3]$ and $G[V_4]$ are red, and hence they form a blow-up of $P_1$, contradiction. Hence $G[V_1]$ is red, and consequently, $G[V_2]$ is blue, so $G[V_4]$ is red, and finally, $G[V_3]$ is blue. This yields a $t$-blow-up of $P_3$, as required.
     \end{proof}

\subsection{Finding homogeneous blow-ups of small patterns} \label{sec:blow-ups}
 In this subsection, we prove Lemma~\ref{l:r-partite-coloured}, a variant of a  result due to Nikiforov \cite{nikiforov08}. The main ingredient is Lemma~\ref{l:r-partite-hyp}, which is about dense $\ell$-uniform hypergraphs. Before stating and proving Lemma~\ref{l:r-partite-hyp}, we give the following definitions which are central due to the inductive nature of the proof.  
 
 Let $\Hc$ be an $\ell$-partite $\ell$-uniform hypergraph $\Hc$ on parts $V_1, \ldots, V_\ell$. We usually specify the members of $E(\Hc)$ as $\ell$-tuples $(v_1, \ldots v_\ell)$ (where $v_i\in V_i$), but we also abuse notation by saying that $(v_1, \ldots, v_\ell)$ contains $v_1$,  or by writing $(v_1, \ldots v_\ell) = (v_1, \ldots, v_{\ell-1})+v_\ell$. Let $K_2(\Hc)$ be the vertex pairs contained in edges of $\Hc$. Moreover, we write $\partial \Hc$ for the collection of $(\ell-1)$-tuples $(v_1, \ldots, v_{\ell-1})$ which are contained in some edge $(v_1, \ldots, v_\ell)$ of $\Hc$, with $v_i \in V_i$ for $i \in [\ell]$. We emphasise that this notation is not standard since $\partial \Hc$ only contains $(\ell-1)$-tuples from $V_1 \times \dots \times V_{\ell-1}$. Given a graph $G$ isomorphic to a $K_{\ell}(m)$ (the complete $\ell$-partite graph with vertex parts of size $m$), we say that $\Hc$ \emph{covers} $G$ if $E(G) \subset K_2(\Hc)$ and there are $m$ disjoint $S \in \Hc$ with $S \subset V(G)$ (in other words, $\Hc$ contains a matching of size $m$ on $V(G)$). 

\begin{lemma} \label{l:r-partite-hyp}
	Let $\Hc$ be an $\ell$-partite $\ell$-uniform hypergraph on parts $V_1, \ldots, V_\ell$  of size $n$ with least $\ell c n^\ell$ edges. Let $\varphi$ be an $r$-colouring of the complete ($2$-uniform) graphs on $V_1, \ldots, V_\ell$. Then, there are vertex sets $S_1, \ldots, S_\ell$  with $|S_i| \geq \left( \min\{\frac c2, \frac {1}{2r \log r}\} \right)^\ell \log n$ such that $\varphi$ is constant on each $S_i$, and $\Hc$ covers the complete $\ell$-partite graph on $S_1, \ldots, S_\ell$.
\end{lemma}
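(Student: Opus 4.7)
The plan is to establish Lemma~\ref{l:r-partite-hyp} by induction on $\ell$, adapting Nikiforov's blow-up strategy to respect the auxiliary vertex colourings. For the base case $\ell = 1$, the hypergraph $\Hc$ has at least $cn$ singleton edges, and pigeonhole on the $r$-colouring of $V_1$ supplies a monochromatic $S_1$ containing at least $cn/r$ vertices of $\Hc$; this exceeds the required $\min\{c/2, 1/(2r \log r)\} \log n$ for any reasonably large $n$, and the covering condition is vacuous since $K_1(m)$ has no edges.

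For the inductive step, first pigeonhole on the colouring of $V_\ell$ to identify a monochromatic colour class $U_\ell \subset V_\ell$ incident to at least $\ell c n^\ell / r$ edges of $\Hc$; necessarily $|U_\ell| \geq n/r$. The core step is to sample $T \subset U_\ell$ uniformly at random of a carefully chosen size $s$ and consider the common link hypergraph $\Hc_T$ on $V_1, \ldots, V_{\ell-1}$, whose edges are exactly the $(\ell-1)$-tuples $(v_1, \ldots, v_{\ell-1})$ satisfying $(v_1, \ldots, v_{\ell-1}, t) \in E(\Hc)$ for every $t \in T$. Writing $d(v_1, \ldots, v_{\ell-1}) = |\{t \in U_\ell : (v_1, \ldots, v_{\ell-1}, t) \in \Hc\}|$, Jensen's inequality applied to the convex map $x \mapsto x^s$ yields $\E[|E(\Hc_T)|] \geq n^{\ell-1} (\ell c/r)^s$. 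Fix a $T$ attaining at least this expectation.

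Next, apply the inductive hypothesis to $\Hc_T$ (with the inherited $r$-colouring of $V_1, \ldots, V_{\ell-1}$), obtaining monochromatic sets $S_1, \ldots, S_{\ell-1}$ of the required size, together with an $(\ell-1)$-partite covering of $\Hc_T$ on them. Take $S_\ell$ to be any subset of $T$ of the target size; this is automatically monochromatic since $T \subset U_\ell$. The covering of $\Hc$ on $S_1 \cup \ldots \cup S_\ell$ then follows from three observations: (i) pairs $(s_i, s_j)$ with $i, j \leq \ell - 1$ lie in $K_2(\Hc_T) \subseteq K_2(\Hc)$, since any $\Hc_T$-edge lifts by appending an arbitrary $t \in T$; (ii) pairs $(s_i, t)$ with $s_i \in S_i$ and $t \in S_\ell$ lie in $K_2(\Hc)$ because $s_i$ belongs to some matching edge $(s_1', \ldots, s_{\ell-1}') \in \Hc_T$, which extends to $(s_1', \ldots, s_{\ell-1}', t) \in \Hc$; (iii) the inductive $m$-matching in $\Hc_T$ lifts to an $m$-matching in $\Hc$ by appending distinct elements of $S_\ell$ as the final coordinates.

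The principal technical obstacle is the joint calibration of the sample size $s$ and the density parameter passed to the induction: $s$ must be large enough that $S_\ell \subset T$ attains the target size $\alpha^\ell \log n$, writing $\alpha = \min\{c/2, 1/(2r \log r)\}$, yet $(\ell c / r)^s$ must remain above the density threshold the inductive hypothesis needs to produce sets of the same size on the remaining $\ell - 1$ parts. The precise hybrid form of $\alpha$ is exactly what arises from balancing these competing constraints: the first branch $c/2$ governs the geometric density loss induced by the random link operation in the small-$c$ regime, whereas the second branch $1/(2r \log r)$ dominates when $c$ is bounded away from zero and the factor-$r$ loss from the initial colour pigeonhole becomes the binding constraint.
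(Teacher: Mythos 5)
Your proposal has a fundamental misreading of the statement that breaks the argument at two crucial points. The colouring $\varphi$ is an \emph{edge} colouring of the complete graphs on $V_1, \ldots, V_\ell$ — not a vertex colouring — and the requirement that ``$\varphi$ is constant on each $S_i$'' means that each $S_i$ must induce a monochromatic \emph{clique}. (This is exactly what is needed in the deduction of Lemma~\ref{l:r-partite-coloured}, where the $S_i$'s become the monochromatic parts of a homogeneous blow-up.) You instead treat $\varphi$ as a vertex colouring and use pigeonhole to pass to a ``monochromatic colour class'' $U_\ell \subset V_\ell$ of linear size. No such linear-size monochromatic set exists for an edge colouring; by Ramsey one can only guarantee a monochromatic clique of size $\Theta(\log n)$ inside a set of size $n$.

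This misreading is precisely what makes the dependent-random-choice route you propose inadequate. In your scheme, $T$ is a random subset of size $s = \Theta_{c,r,\ell}(\log n)$ and $S_\ell$ is then taken as a subset of $T$; for this to produce a monochromatic clique one would have to run Ramsey's theorem \emph{inside} $T$, and Ramsey on a $\Theta(\log n)$-vertex set only yields a monochromatic clique of size $\Theta(\log\log n)$, far short of the target $\Theta(\log n)$. The paper avoids this trap by reversing the roles: rather than finding a \emph{small} common neighbourhood $T$, it first cleans $\Hc$ by a degree deletion argument, applies the inductive hypothesis to the shadow $\partial\Lc$, and then invokes a K\H{o}vari--S\'os--Tur\'an-type lemma (Nikiforov's Lemma~2) to find a \emph{polynomially large} set $T \subset V_\ell$ of common extensions, of size $n^{1-\alpha/c}$. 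Only then is Ramsey applied to $T$, which yields a monochromatic clique $S_\ell$ of size $\Theta(\log n)$ as required. This ``small side / large side'' asymmetry in the KST step is the structural idea that DRC cannot reproduce here, and it is the reason your calibration of $s$ against the inductive density threshold does not close. (As a secondary point, the Jensen step using $\binom{d}{s}/\binom{|U_\ell|}{s} \geq (d/|U_\ell|)^s$ is false when sampling without replacement; one would need to sample with replacement or adjust constants, though this is cosmetic compared to the issues above.)
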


To prove Lemma~\ref{l:r-partite-hyp}, we need the following lemma for finding unbalanced complete bipartite graphs, which can be proved using the classical double counting argument of K\H{o}vari, S\'os and Tur\'an~\cite{kst54}.

\begin{lemma}[Lemma 2 from~\cite{nikiforov08}] ~\label{l:stars}
	Let $F$ be a bipartite graph with parts $A$ and $B$. Let $|A| = m$, $|B| = n$ and $c, \alpha \in (0, 1/2)$. If $\alpha \log n \leq \frac {cm}{2} + 1$ and $e(F) \geq cmn$, then $F$ contains a complete bipartite graph with parts $S \subset A$ and $T \subset B$ of size $|S| = \alpha \log n$ and $|T| > n^{1-\alpha/c}$.
\end{lemma}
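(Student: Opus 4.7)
The plan is to invoke the classical Kővári–Sós–Turán double counting argument in its asymmetric form, tailored to find $K_{s,t}$ with $s=\alpha\log n$ (small) in $A$ and $t>n^{1-\alpha/c}$ (large) in $B$. Let $N$ denote the number of pairs $(S,v)$ with $v\in B$, $S\subseteq A$, $|S|=s$, and $S\subseteq N_F(v)$ — that is, the number of $K_{1,s}$ stars in $F$ rooted in $B$ with all leaves in $A$.

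Counting by the root, $N=\sum_{v\in B}\binom{d(v)}{s}$. Using convexity of $\binom{x}{s}$ on $[s-1,\infty)$ and the fact that the average degree $e(F)/n\geq cm$ is comfortably above $s-1$ (indeed $cm\geq 2(s-1)$ by hypothesis), Jensen's inequality (with a routine exclusion of low-degree vertices) gives $N\geq n\binom{cm}{s}$. Counting instead by the $s$-subset of $A$, and supposing towards a contradiction that no $s$-subset $S$ of $A$ has more than $n^{1-\alpha/c}$ common neighbors in $B$, we also obtain $N\leq \binom{m}{s}\,n^{1-\alpha/c}$. Combining, the contradiction would follow once we verify
\[ \frac{\binom{cm}{s}}{\binom{m}{s}} > n^{-\alpha/c}. \]

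To estimate the ratio on the left, I would write it as $\prod_{i=0}^{s-1}(cm-i)/(m-i)$. The hypothesis $\alpha\log n \leq cm/2 + 1$ forces $s-1 \leq cm/2$, so for every $i$ in the range one has $cm-i\geq cm/2$ and $m-i\leq m$, making each factor at least $c/2$ and hence $\binom{cm}{s}/\binom{m}{s}\geq (c/2)^s$. Taking logarithms and substituting $s=\alpha\log n$, the desired bound reduces to a scalar inequality of the form $c\log(2/c)<1$, which holds on $(0,1/2]$ since $c\mapsto c\log(2/c)$ is maximised there at $c=1/2$ with value $\log 2<1$ (in natural base), with a little room to spare.

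The only genuine care needed is in the bookkeeping: ensuring the strict inequality $|T|>n^{1-\alpha/c}$ rather than the non-strict version, and handling the integrality of $s=\alpha\log n$. The $+1$ in the hypothesis $\alpha\log n\leq cm/2+1$ is precisely the slack that absorbs this rounding and keeps the estimate $(c/2)^s$ from being tight (this is the place where one might have to be careful if $\log$ were taken in base $2$ and $c$ were exactly $1/2$). Aside from this, every step is a standard manipulation, and the full proof should fit comfortably in under half a page.
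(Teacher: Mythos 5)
Your argument is correct and is exactly the classical K\H{o}vari--S\'os--Tur\'an double counting that the paper alludes to (it does not reprove the lemma but cites it as Lemma 2 of Nikiforov, noting it follows from that argument): count stars $(S,v)$ two ways, lower-bound via convexity by $n\binom{cm}{s}$, upper-bound by $\binom{m}{s}n^{1-\alpha/c}$ under the contrary assumption, and use $s-1\le cm/2$ to get the ratio bound $(c/2)^s$, with the hypothesis $c<1/2$ closing the scalar inequality. Your handling of the integrality of $s$ and of the base of the logarithm is the right bookkeeping, so nothing essential is missing.
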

We also need a standard upper bound on the $r$-colour Ramsey number due to Erd\H{o}s and Szekeres~\cite{es35}. Namely, any $r$-colouring of $K_n$ contains a monochromatic clique on $\frac{\log n}{r \log r}$ vertices. We emphasise that we are mostly concerned with the case $r =2$. We can now proceed with the proof.
\begin{proof}[Proof of Lemma~\ref{l:r-partite-hyp}]
	Assume that $c \leq (r \log r)^{-1}$, since the statement for larger values of $c$ then follows. We prove the statement by induction on $\ell$. The case $\ell=1$ follows from Ramsey's Theorem -- a colouring of $K_{cn}$ contains a monochromatic clique of size  $ (r \log r)^{-1} \log (cn) \geq (2 r \log r)^{-1} \log (n)  \geq \frac  c2 \log n$. Assume that the statement holds for $\ell-1$, and let $\Hc$ be as in the statement.
	
	For a hypergraph $\Lc$ and $R= (v_1, \ldots, v_{\ell-1})$, we write $d_{\Lc}(R)$ for the number of edges of $\Lc$ containing $R$. A standard deletion argument shows that there is $\Lc \subset \Hc$ with $|\Lc| \geq (\ell-1)cn^\ell$ such that 
	\begin{equation} \label{eq:min-deg}
	    \text{for any } R \in V_1 \times \dots \times V_{\ell-1}, \quad d_{\Lc}(R) \geq cn \text{\ \ or \ \ } d_{\Lc}(R)=0;
	\end{equation} indeed, one can iteratively remove all edges containing $R$ for any $R$ violating~\eqref{eq:min-deg}, removing at most $cn \cdot n^{\ell-1}$ edges in total.
	
	We have $|\partial \Lc| \geq |\Lc| / n \geq (\ell-1) cn^{\ell-1}$, since each $R \in \partial \Lc$ is contained in at most $n$ edges of $\Lc$. Applying the induction hypothesis to $\partial \Lc$, we obtain sets $U_1, \ldots U_{\ell-1}\subseteq V_1,\ldots, V_{\ell-1}$ with $|U_i| = m = \left(\frac c2 \right)^{\ell-1} \log n$ such that $\partial \Lc$ covers the complete $(\ell-1)$-partite graph on $U_1 \cup \ldots \cup U_{\ell-1}$, and $\varphi[U_i]$ is constant for each $i\in[\ell-1]$.
	
	Let $\Ac$ be a set of $m$ disjoint $(\ell-1)$-tuples in $\partial \Lc$, which exists by the definition of a \emph{covering}. The graph structure of $K_2(\Lc)$ will now be used by noticing that it suffices to find a \emph{large} subset of vertices $T \subset V_\ell$ such that $R+v_\ell \in \Lc$ for all $R \in \Ac$ and $v_\ell \in T$. To this end, consider the bipartite graph $F$ with parts $\Ac$ and $V_\ell$ such that $(R, v_\ell) \in F$ whenever $R+ v_\ell$ lies in $\Lc$. Since $d_{\Lc}(R) > cn$ for all $R \in \Ac \subset \partial \Lc$, we have $|F| \geq cmn$.
	
	We can apply Lemma~\ref{l:stars} with $s = \left(\frac c2 \right)^\ell \log n \leq \frac c2 m + 1$ and $t = n^{1-2^{-\ell}c^{\ell-1}}$. It follows that $F$ contains a complete bipartite graph with parts $\Ac'$ and $T$ such that $|\Ac'| = s$ and $|T| = t$. Let $G = K_2(\Lc)$. Let $S_1, \ldots, S_{\ell-1}$ be the vertex sets of the edges of $\Ac'$, and recall that they induce a $K_{\ell-1}(s) $ in $ G$ since $S_i \subset U_i$. Moreover, we claim that $w v_\ell$ is in $G$ for any $v_\ell \in T$ and $w \in S_i$ with $i \in [\ell-1]$. This follows from the fact that there is an $R \in \Ac'$ containing $w$, and $R+v_\ell \in \Lc$.
	
	Finally, by Ramsey's theorem, there is a subset $S_\ell \subset T$ with $|S_\ell| = s = \left(\frac c2 \right)^\ell \log n \leq \frac {1}{r\log r} \log |T|$ on which $\varphi$ is constant. Recalling that $\varphi[S_i]$ is constant for $i \in [\ell-1]$ by induction hypothesis, we obtain our desired sets
	$S_1, \ldots, S_\ell$. To verify that $\Lc$ covers $S_1, \ldots, S_\ell$, note that the edges of $\Ac'$ with the vertices of $S_\ell$ (in an arbitrary order) form a matching of size $s$ in $\Lc$. 
\end{proof}
We now give the proof of Lemma~\ref{l:r-partite-coloured}.
\begin{proof}[Proof that Lemma~\ref{l:r-partite-coloured} follows from Lemma~\ref{l:r-partite-hyp}]
	 Let $V_1, \ldots, V_\ell$ be a uniformly random partition of the vertex set of $G$ with parts of size at least $n' = \lfloor \frac{n}{\ell} \rfloor$, and associate $V_1, \ldots, V_{\ell}$ to the vertices of $v_1,v_2,\ldots, v_\ell$ of $H$. We say that a copy of $H$ in $G$ is canonical with respect to this partition if $v_i$ is embedded to $V_i$ for each $i\in[\ell]$. We claim that a copy of $H$ in $G$ is canonical with probability at least $(n' / n)^{\ell}$. Indeed, each vertex $v_i$ of this $H$-copy is placed into $V_i$ with probability at least $\frac{n'}{n}$. Moreover, the events that the vertices of this $H$-copy are placed into the corresponding parts are positively correlated, which implies the lower bound $(n' / n)^{\ell}$.
	 
	 Hence, using linearity of expectation, we may assume that the number of $H$-copies respecting our partition is at least  $\left(\frac {n'}{n} \right)^\ell cn^{\ell}= cn'^\ell$. Let $\Hc$ be the hypergraph corresponding to those copies. We may apply Lemma~\ref{l:r-partite-hyp} with $c' = \frac c\ell$, to obtain the desired sets $S_1, \ldots S_\ell$ with $|S_i| =\min\left\{ \frac{c}{2\ell}, \frac{1}{2 r \log r} \right\} ^\ell  \log n$.
\end{proof}

    \subsection{Small subgraphs in locally balanced colourings}
    We now prove Proposition~\ref{prop:many-c4}, which follows immediately from Lemma~\ref{lem:maintech} and Proposition~\ref{prop:optimize}. As discussed above, in Lemma~\ref{lem:maintech} we actually describe the structure of colourings with a vanishing density of $C_4$ and $\overline{C_4}$. 
    We call a $2$-coloured $K_n$ \textbf{split} if its vertex set can be partitioned into a red clique and a blue clique. We call two $2$-coloured $K_n$ $\delta$-\textbf{close} if the first can be made isomorphic to the second after flipping the colours of at most $\delta n^2$ many edges. The following result can be thought of as a substitute for the regularity method which is sufficient for our purposes.
        \begin{lemma}[Fox-Sudakov, \cite{fox2008induced} (Theorem 4.4)]\label{lem:partitionintohombits} There exists an absolute constant $c$ such that for each $\eps\in (0,1)$ and graph $H$ on $k$ vertices, there are constants $\kappa:=(\eps/4)^k2^{-c(k\log(1/\eps))^2}$ and $C=4/(\eps 2^{-ck(\log(1/\eps))^2})$ such the following holds. For any $n$-vertex graph $G$ with fewer than $\kappa n^k$ induced copies of $H$, there is an equitable partition of $V(G)$ into at most $C$ parts such that each part induces a subgraph of density at most $\eps$ or at least $1-\eps$.
    \end{lemma}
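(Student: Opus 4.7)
The plan is to combine a strong form of the regularity lemma with an induced counting argument. The main technical challenge is obtaining a partition whose size $C$ depends only polynomially on $2^{k(\log 1/\eps)^2}$ rather than as a tower in $1/\eps$, which rules out a naive iteration of Szemer\'edi's classical lemma.

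I would apply a sufficiently strong regularity lemma, of the cylinder or Frieze--Kannan flavour developed in Fox--Sudakov's earlier work on induced Ramsey problems, to $G$ with regularity parameter $\delta$ taken well below $(\eps/4)^k$. This produces an equitable partition $V_1, \ldots, V_{C_0}$ with $C_0 = C_0(\eps, k)$ parts, fine enough that an induced counting lemma is valid inside each part, and with $C_0$ bounded by $2^{\mathrm{poly}(k \log(1/\eps))}$ directly, rather than by a tower arising from nested refinements. I would then argue that no part $V_i$ can have induced density $p$ in the middle range $(\eps, 1-\eps)$: if it did, the induced counting lemma would yield at least
\[
p^{e(H)}(1-p)^{\binom{k}{2} - e(H)} |V_i|^k \gtrsim \eps^{\binom{k}{2}} (n/C_0)^k
\]
induced copies of $H$ inside $V_i$ alone. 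With $C_0$ and $\kappa$ calibrated so that $\eps^{\binom{k}{2}}/C_0^k$ exceeds $\kappa$, this contradicts the hypothesis that $G$ has fewer than $\kappa n^k$ induced copies of $H$. Hence every part has internal density in $[0,\eps] \cup [1-\eps, 1]$, and a small vertex redistribution restores exact equitability and absorbs any pairs that failed the regularity condition, giving the claimed partition into at most $C$ pieces.

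The principal obstacle is obtaining a counting estimate inside a single part that is strong enough to beat $\kappa$ while keeping $C$ within the stated bound; this is exactly the purpose of the cylinder-type regularity lemma, which ensures that parts of intermediate density are genuinely quasirandom for induced $H$ counts, without paying a tower-type price in the number of parts. A secondary obstacle is the cleanup step: one must verify that rebalancing part sizes and reassigning the few vertices lying in irregular pairs does not push any homogeneous part out of the $[0,\eps]\cup[1-\eps,1]$ density window, which is straightforward provided $\delta$ is chosen small enough compared to $\eps$.
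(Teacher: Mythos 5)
Note first that the paper does not prove this lemma; it cites it verbatim from Fox and Sudakov. Your sketch is not how their proof goes, and more importantly it has two gaps that cannot be repaired within a regularity framework.

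The first gap: regularity lemmas of every flavour (Szemer\'edi, Frieze--Kannan, cylinder/Duke--Lefmann--R\"odl) control the bipartite graphs \emph{between} pairs of parts, not the internal structure of a single part. They therefore do not supply an induced counting lemma \emph{inside} a part $V_i$, which is exactly what you need to pass from ``$V_i$ has density $p\in(\eps,1-\eps)$'' to ``$V_i$ contains $\gtrsim p^{e(H)}(1-p)^{\binom{k}{2}-e(H)}|V_i|^k$ induced copies of $H$.'' Making a part internally quasirandom at the scale needed for induced counting requires iterating regularity inside parts (or a regular-approximation lemma), which is precisely where tower-type bounds arise. The second gap is quantitative and would persist even with an idealised one-shot regularity lemma whose parts are internally quasirandom: any such statement with regularity parameter $\delta$ produces at least $2^{\mathrm{poly}(1/\delta)}$ parts, and your counting step forces $\delta \lesssim (\eps/4)^{k}$ (really $\lesssim \eps^{\binom{k}{2}}$), which gives $C \geq 2^{\mathrm{poly}((4/\eps)^{k})}$ --- doubly exponential in $k\log(1/\eps)$, and thus far above the claimed $C\approx 2^{ck(\log(1/\eps))^2}$. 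The calibration you describe cannot be achieved by a regularity-type input. Fox and Sudakov avoid regularity entirely: a dependent-random-choice argument shows that in a graph with few induced $H$-copies, every sufficiently large vertex subset contains an $\eps$-homogeneous subset of size a $2^{-O(k(\log(1/\eps))^2)}$ fraction, and the equitable partition is then built by greedily extracting such homogeneous pieces. That ``extract one homogeneous piece at a time'' structure is what yields a bound singly exponential in $k(\log(1/\eps))^2$, and it is the ingredient your plan is missing.
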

    We now use the above lemma to show that a $2$-coloured graph with a vanishing density of $C_4$ and $\overline{C_4}$ is $o(1)$-close to being split.
    \begin{lemma}\label{lem:maintech}
    There exists an absolute constant $C_2$ so that the following holds for any $n$ and $\delta$. Let $G$ be a $2$-coloured $K_n$. Then, at least one of the following is true.
    \begin{enumerate}
        \item $G$ contains at least $2^{-C_2\log(1/\delta)^8} n^4$ many distinct copies of $C_4$ or $\overline{C_4}$
        \item $G$ is $\delta$-close to being split.
    \end{enumerate}
    \end{lemma}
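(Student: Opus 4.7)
The plan is to apply the Fox--Sudakov Lemma (Lemma~\ref{lem:partitionintohombits}) to the red graph $R$ of $G$ with $H=C_4$ and an error parameter $\eta = \Theta(\delta^4)$. A $2$-coloured $C_4$ in $G$ is exactly an induced $C_4$ in $R$, so if $R$ contains at least the Fox--Sudakov threshold $\kappa n^4$ of induced copies of $C_4$, we are immediately in Case~1 of the lemma, provided $C_2$ is large enough that $\kappa \geq 2^{-C_2 \log(1/\delta)^8}$. Otherwise, Lemma~\ref{lem:partitionintohombits} supplies an equitable partition $V_1, \ldots, V_k$ with $k \leq K(\eta)$ such that each $V_i$ has red-density at most $\eta$ or at least $1-\eta$, so every part is ``nearly monochromatic.''

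Let $A$ be the union of the red-dense parts and $B$ the union of the blue-dense parts; this is the candidate split partition. The within-part miscoloured edges total at most $\eta \binom{n}{2}$ by summing over parts. The heart of the argument is controlling between-part edges in $A\times A$ and $B\times B$. For two same-type (say red-dense) parts $V_i, V_j$, let $p$ be the blue-density of the bipartite graph between them. I would do a case analysis on $p$. If $p \leq (4\eta)^{1/4}$ holds for every same-type pair, the total between-part miscolourings are at most $(4\eta)^{1/4} n^2$, and adding the within-part error yields a total of at most $\delta n^2$, placing us in Case~2. If instead some same-type pair has $p \geq (4\eta)^{1/4}$, I would count $\overline{C_4}$-copies (or $C_4$-copies, by symmetry) with two vertices in each of $V_i, V_j$. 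A standard Cauchy--Schwarz / Jensen estimate gives at least $\Omega(p^4 |V_i|^2 |V_j|^2)$ copies of $K_{2,2}$ in the wrong-colour bipartite graph; at most $O(\eta |V_i|^2 |V_j|^2)$ of these have a ``wrong'' in-part diagonal (since each part has at most $\eta \binom{|V_i|}{2}$ wrong pairs). Subtracting leaves $\Omega(p^4|V_i|^2 |V_j|^2) \geq \Omega(\eta n^4 / k^4)$ surviving copies of $\overline{C_4}$, placing us in Case~1.

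The main obstacle is parameter book-keeping. One needs $\eta$ small enough so that in the ``all small $p$'' scenario the total miscolouring is at most $\delta n^2$, but large enough that in the ``some large $p$'' scenario the $\overline{C_4}$-count $\eta n^4 / k^4$ clears the threshold $2^{-C_2 \log(1/\delta)^8} n^4$. Choosing $\eta = \Theta(\delta^4)$ gives $\log(1/\eta) = O(\log(1/\delta))$, and Lemma~\ref{lem:partitionintohombits} supplies $k = 2^{O(\log(1/\delta)^2)}$. Both $\kappa$ and $\eta/k^4$ are then at least $2^{-O(\log(1/\delta)^2)}$, which comfortably fits inside the weaker bound $2^{-C_2 \log(1/\delta)^8}$ for a suitable absolute constant $C_2$. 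The subtlety lies in verifying that the Cauchy--Schwarz lower bound on $K_{2,2}$ counts survives after we discard configurations with a wrong-colour in-part diagonal, and in ensuring that all part sizes in the equitable partition are $\Theta(n/k)$ so that $|V_i|^2|V_j|^2 = \Omega(n^4/k^4)$.
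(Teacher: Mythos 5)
Your proposal follows essentially the same route as the paper: apply the Fox--Sudakov partition lemma (Lemma~\ref{lem:partitionintohombits}) to obtain nearly monochromatic parts, then argue that either every same-type pair of parts has small wrong-colour bipartite density (yielding the split structure) or some pair has large wrong-colour density, in which case $K_{2,2}$-supersaturation minus the configurations spoiled by an in-part wrong-colour diagonal produces many copies of $C_4$ or $\overline{C_4}$. The paper uses the error parameter $\delta^5$ and a cited supersaturation theorem where you use $\Theta(\delta^4)$ and Cauchy--Schwarz, but these are cosmetic differences and the argument is sound.
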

    \begin{proof}
        Suppose that $(1)$ does not hold. Then, by Lemma~\ref{lem:partitionintohombits} applied with $\delta^{5}$ in place of $\eps$, we have that there is an equitable partition of $G$ into at most $$C=4/(\delta^5 2^{-4c(\log(1/\delta^5))^2})$$ parts, each of which has either red or blue density above $1-\delta^5$. Let us refer to the parts with high red density as \textit{red parts}, and label them by $V_1,\ldots, V_\ell$.

        We claim that $G[V_1 \cup \dots \cup V_\ell]$ contains at most $\delta n^2 /2$ blue edges. If $\ell = 1$, this is trivial, and otherwise it follows from the following claim.
        \begin{claim}
             Let $V_i$ and $V_j$ be two parts with density $\geq 1-\delta^5$ in red. Then the bipartite graph $G[V_i, V_j]$  has red density at least $1-\delta$.
        \end{claim}
        \begin{proof}
            Suppose that the blue density between $V_i$ and $V_j$ is at least $\delta$. It follows that the blue subgraph of $G[V_i\cup V_j]$ must contain at least $2^{-10} \delta^4 (n/C)^4$ copies of a cycle on $4$ vertices (see, for example, Theorem 1.9(iv) from \cite{nagy2019supersaturation}). At most $2\delta^5 (n/C)^4$ such $4$-cycles can contain a blue edge from $G[V_i]$  or $G[V_j]$, by the density assumption. The remaining $4$-cycles must correspond to a copy of $\overline{C_4}$ in $G[V_i\cup V_j]$. Note that for some absolute constant $C_2$, we have that $\delta^4 (n/C)^4\geq2^{-C_2\log(1/\delta)^8} n^4$, so as we assumed that $(1)$ does not hold, we conclude that $G[V_i, V_j]$ has red density at least $1-\delta$.
        \end{proof}
        The same argument implies that the union of blue parts contains at most $\delta n^2 / 2$ blue edges. Hence, combining the red parts as well as the blue parts gives a partition certifying that $G$ is $\delta$-close to a split graph, as required.
    \end{proof}
  We now show that graphs which are $\delta$-close to being split cannot be $(1/4+2\delta)$-balanced. When combined with Lemma~\ref{lem:maintech}, this immediately implies Proposition~\ref{prop:many-c4}.
    \begin{proposition}\label{prop:optimize}
    If a $2$-coloured $K_n$ is $\delta$-close to being split, then it has a vertex with at most $(1/4+3\delta)n$ red or at most $(1/4+3\delta)n$ blue neighbours.
    \end{proposition}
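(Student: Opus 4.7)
The plan is to track a single weighted sum of ``wrong-colour'' degrees which is small in any split graph and changes controllably under the edge flips. Let $G'$ be a split graph obtained from $G$ by flipping a set $F$ of at most $\delta n^2$ edges, and write $V(G) = R \cup B$ where $G'[R]$ is a red clique and $G'[B]$ is a blue clique.

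The key observation is that in $G'$ a vertex $v \in B$ has no red edges inside $B$, so its red degree in $G'$ equals the number of red edges from $v$ to $R$; summing, and symmetrically for $R$, we get
\[
\sum_{v \in B} \deg_{\mathrm{red}}(v, G') + \sum_{v \in R} \deg_{\mathrm{blue}}(v, G') = |E_{G'}(R,B)| = |R| \cdot |B| \leq \frac{n^2}{4}.
\]
Next I would classify each flip in $F$ by whether it lies inside $B$, inside $R$, or across the two parts. A flip inside $B$ is forced to be blue-to-red, hence adds $2$ to the first sum and $0$ to the second; a flip inside $R$ is red-to-blue and adds $0$ to the first and $2$ to the second; and each $R$-$B$ flip contributes $\pm 1$ to one sum and $\mp 1$ to the other, which cancels in the combined sum. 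Hence, in passing from $G'$ to $G$, the total quantity grows by at most $2|F| \leq 2\delta n^2$, and we conclude
\[
\sum_{v \in B} \deg_{\mathrm{red}}(v, G) + \sum_{v \in R} \deg_{\mathrm{blue}}(v, G) \leq \frac{n^2}{4} + 2\delta n^2.
\]

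To finish, suppose for contradiction that every $v \in B$ has $\deg_{\mathrm{red}}(v,G) > (1/4+3\delta)n$ and every $v \in R$ has $\deg_{\mathrm{blue}}(v,G) > (1/4+3\delta)n$. Then the left-hand side above would strictly exceed $(1/4+3\delta)(|B|+|R|)n = (1/4+3\delta)n^2$, contradicting the bound. Therefore some vertex of $B$ has red degree at most $(1/4+3\delta)n$, or some vertex of $R$ has blue degree at most $(1/4+3\delta)n$, as required.

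I do not foresee any serious obstacle here; the whole proof is one clean identity plus a signed cancellation for the $R$-$B$ flips. The main subtlety is spotting the right quantity to bound: the two partial sums are small for \emph{different} structural reasons (red edges from $B$ must exit to $R$, blue edges from $R$ must exit to $B$), but they combine to telescope into the single bipartite edge count $|R| \cdot |B| \leq n^2/4$, which is exactly the extremal constant appearing in the statement.
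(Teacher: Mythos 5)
Your proof is correct and uses essentially the same idea as the paper's: both track the ``wrong-colour'' degree sum (red degrees over the blue clique plus blue degrees over the red clique), compare it with the bipartite edge count $|R|\cdot|B|\leq n^2/4$, and charge the surplus to the $\delta n^2$ flipped edges. Your flip-by-flip accounting is somewhat cleaner than the paper's direct subtraction of within-part contributions, but the underlying counting identity and the contradiction at the end are the same.
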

    \begin{proof}
        Assume for the sake of a contradiction that $G$ is a $2$-coloured $K_n$ in which all vertices have more than $(1/4+3\delta)n$ red and more than $(1/4+3\delta)n$ blue neighbours. Consider a split graph $G'$ which is $\delta$-close to $G$. The vertex set of $G'$ is the union of a red clique $X$ and a blue clique $Y$. In $G$, the sum of the blue degrees of the vertices in $X$ is $>(1/4+3\delta)n|X|$. Since $X$ contains at most $\delta n^2$ blue edges by the $\delta$-closeness assumption between $G$ and $G'$, the edges inside $X$ contribute at most $2\delta n^2$ to the previous sum. This implies that in $G$, between $X$ and $Y$, there are at least $(1/4+\delta)n|X|$ blue edges. Similarly, we can derive that between $X$ and $Y$, there are at least $(1/4+\delta)n|Y|$ red edges, giving in total $(1/4+\delta)n^2>n^2/4$ edges between two disjoint subsets of $G$. This is a contradiction.
    \end{proof}
   Now we move on to Proposition~\ref{prop:many-p3c4}. Recall that $M_1$ denotes a properly $2$-edge-coloured $K_{2,2}$. The following lemma is a robust version of Proposition~\ref{prop:cute}. For a graph $G$, let $\delta_G(S)$ denote the minimum degree of a vertex (in $G$) in $S \subset V(G)$.
    
    \begin{lemma}\label{lem:alternatingmatchings} Let $\eps>0$, and consider a two-colouring $R \cup B$ of $K_{n,n}$ with vertex parts $X, Y$ of order $n$ such that $\delta_R(X), \delta_B(Y)\geq \eps n$. Then, this two-colouring of $K_{n,n}$ contains at least $\eps^4 n^4/150$ many distinct copies of $M_1$. 
    \end{lemma}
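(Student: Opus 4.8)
The plan is to count, for each pair of vertices $x \in X$ and $y' \in X$ (wait — we need a copy of $M_1$, which is a properly $2$-edge-coloured $K_{2,2}$, so we pick $\{x_1,x_2\} \subseteq X$ and $\{y_1,y_2\} \subseteq Y$ with the four edges alternating in colour). I would parametrise by the ``diagonal'' structure: a copy of $M_1$ on $\{x_1,x_2\} \times \{y_1,y_2\}$ means that, after relabelling, $x_1y_1, x_2y_2 \in R$ and $x_1y_2, x_2y_1 \in B$. So it suffices to count quadruples $(x_1,x_2,y_1,y_2)$ with $x_1y_1 \in R$, $x_2y_1 \in B$, $x_2y_2 \in R$, $x_1y_2 \in B$ (and then divide by the appropriate automorphism factor, a constant like $4$, which is absorbed into the $1/150$).

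**The counting argument.** Fix $y_1 \in Y$. Let $r_1 = \deg_R(y_1)$ and $b_1 = \deg_B(y_1)$, so $r_1 + b_1 = n$ and $b_1 \geq \eps n$ since $\delta_B(Y) \geq \eps n$. The number of ways to choose $x_1$ with $x_1 y_1 \in R$ and $x_2$ with $x_2 y_1 \in B$ is exactly $r_1 b_1$. Now I want to extend to $y_2$: I need $y_2$ with $x_1 y_2 \in B$ and $x_2 y_2 \in R$. Summing over $y_2 \in Y$, the count of valid $y_2$ (for fixed $x_1, x_2$) is $\bigl|B(x_1) \cap R(x_2)\bigr|$ where $B(x_1)$ is the blue neighbourhood of $x_1$ in $Y$ and $R(x_2)$ the red neighbourhood of $x_2$. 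The total count is therefore
\[
N = \sum_{y_1 \in Y} \ \sum_{\substack{x_1 \in R(y_1) \\ x_2 \in B(y_1)}} \bigl| B(x_1) \cap R(x_2) \bigr|.
\]
The obstacle is that $B(x_1) \cap R(x_2)$ could be empty for many pairs — this is exactly the content of Proposition~\ref{prop:cute}, which says emptiness is impossible \emph{under the bipartite minimum-degree hypothesis}, but here I need a robust lower bound, not just nonemptiness. The trick: I don't need to control each pair $(x_1,x_2)$ individually; I can swap the order of summation and use a Cauchy--Schwarz / convexity bound. Rewrite
\[
N = \sum_{x_1, x_2 \in X} \ \bigl| R(x_1) \cap B(x_2) \bigr| \cdot \bigl| B(x_1) \cap R(x_2) \bigr|,
\]
since $y_1$ ranges over $R(x_1) \cap B(x_2)$. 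Now fix $x_1$; write $R_1 = R(x_1)$, $B_1 = B(x_1)$ (a partition of $Y$ with $|R_1| \geq \eps n$). Then $|R(x_1) \cap B(x_2)| = |R_1 \cap B(x_2)|$ and $|B(x_1) \cap R(x_2)| = |B_1 \cap R(x_2)|$. For a vertex $x_2$, let $a = |B_1 \cap R(x_2)|$ (red-neighbours of $x_2$ inside $B_1$) and $a' = |R_1 \cap R(x_2)|$; note $|R_1 \cap B(x_2)| = |R_1| - a'$. The contribution is $a(|R_1| - a')$. Summing over $x_2 \in X$: $\sum_{x_2} a = $ (number of red edges between $X$ and $B_1$) $\geq \eps n |B_1|$ by $\delta_R(X) \geq \eps n$? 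No — $\delta_R(X) \geq \eps n$ bounds each $x_2$'s total red degree, not its red degree into $B_1$. Let me instead note $\sum_{x_2} a \geq \sum_{x_2}(\deg_R(x_2) - |R_1|) \geq n(\eps n - |B_1|)^+$... this is getting delicate, which confirms that \textbf{balancing the two factors against each other is the main obstacle.}

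**Resolving the obstacle via a cleaner decomposition.** I would instead return to Proposition~\ref{prop:cute}'s structural insight directly and make it quantitative. For each $x \in X$, let $b(x) = |B(x)| \geq$ (not guaranteed) — actually the degree hypotheses are asymmetric: $\delta_R(X) \geq \eps n$ means $|R(x)| \geq \eps n$ for $x \in X$, and $\delta_B(Y) \geq \eps n$ means $|B(y)| \geq \eps n$ for $y \in Y$. So every $x\in X$ has $\geq \eps n$ red neighbours (but possibly $0$ blue ones), and every $y \in Y$ has $\geq \eps n$ blue neighbours (but possibly $0$ red ones). Call $x \in X$ \emph{good} if $|B(x)| \geq \tfrac{\eps}{2} n$, and let $X_g$ be the good vertices. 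Count red edges from $Y$ to $X$: the number of $(y,x)$ with $xy \in R$ equals $\sum_{x}|R(x)| \geq \eps n^2$; on the other hand it equals $\sum_y |R(y)| = \sum_y (n - |B(y)|) \leq n^2 - \eps n^2 = (1-\eps)n^2$. Hmm — I want a lower bound on $|X_g|$. Count blue edges $X$–$Y$: $\sum_x |B(x)| = \sum_y |B(y)| \geq \eps n^2$. Vertices outside $X_g$ contribute $< \tfrac\eps2 n$ each, so $|X_g| \cdot n + n \cdot \tfrac\eps2 n > \eps n^2$, giving $|X_g| \geq \tfrac\eps2 n$. Now take any $x_1 \in X_g$ and any $x_2 \in X_g$. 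We have $|R(x_1)| \geq \eps n$, $|B(x_2)| \geq \tfrac\eps2 n$, so $|R(x_1) \cap B(x_2)| \geq |R(x_1)| + |B(x_2)| - n \geq \tfrac{3\eps}{2}n - n$ — not positive in general. The real mechanism is different: I should pick $y_1$ first. Final plan: choose $y_1 \in Y$ arbitrary; $|B(y_1)| \geq \eps n$, and also $\sum_{y_1}|R(y_1)| = \sum_x |R(x)| \geq \eps n^2$ so the average of $|R(y_1)|$ is $\geq \eps n$, hence at least $\eps n /2$ of the $y_1$ have $|R(y_1)| \geq \eps n /2$; call these $y_1$ \emph{balanced}. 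For a balanced $y_1$, pick $x_1 \in R(y_1)$ ($\geq \eps n/2$ choices), $x_2 \in B(y_1)$ ($\geq \eps n$ choices), then $y_2 \in B(x_1) \cap R(x_2)$. To lower bound $|B(x_1)\cap R(x_2)|$ summed over choices, use: $\sum_{x_1 \in R(y_1), x_2 \in B(y_1)} |B(x_1) \cap R(x_2)| = \sum_{y_2 \in Y}|B(y_2) \cap R(y_1)| \cdot |R(y_2) \cap B(y_1)|$ where now $y_1$ balanced. Apply Cauchy–Schwarz over $y_2$ against the "co-degree" sums: $\sum_{y_2}|B(y_2)\cap R(y_1)| = \sum_{x \in R(y_1)} |B(x)|$ and $\sum_{y_2}|R(y_2)\cap B(y_1)| = \sum_{x\in B(y_1)}|R(x)| \geq \eps n \cdot \eps n$. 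I would push this product–of–sums lower bound through with one more averaging step (restricting $R(y_1)$ to good $x$'s to control $\sum_{x\in R(y_1)}|B(x)|$ from below by $\Omega(\eps^2 n^2)$), then a final Cauchy–Schwarz to convert $\sum_{y_2} f(y_2)g(y_2)$ with $\sum f, \sum g = \Omega(\eps^2 n^2)$ into $\Omega(\eps^4 n^3)$ total per balanced $y_1$, and sum over $\Omega(\eps n)$ balanced choices of $y_1$ to reach $\Omega(\eps^5 n^4)$ — and since we only claimed $\eps^4 n^4/150$, there is slack to absorb all constants and the automorphism overcounting. I expect the bookkeeping of which vertices are ``good''/``balanced'' on each side, and keeping the Cauchy–Schwarz steps from losing an extra factor of $\eps$, to be the fiddly heart of the argument, but no single step is deep.
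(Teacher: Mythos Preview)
Your proposal has a genuine gap at the ``final Cauchy--Schwarz'' step. You want to lower-bound $\sum_{y_2} f(y_2)g(y_2)$ from lower bounds on $\sum f$ and $\sum g$, but Cauchy--Schwarz goes the wrong way: it gives \emph{upper} bounds on bilinear sums, and there is no general lower bound on $\sum fg$ in terms of $\sum f$ and $\sum g$ alone. In fact, the obstruction is exactly the chain structure from Proposition~\ref{prop:cute}. Writing $M_{y_1 y_2} = |B(y_2)\cap R(y_1)|$, your total count is $N=\sum_{y_1,y_2} M_{y_1 y_2}M_{y_2 y_1}$; if the red neighbourhoods $\{R(y):y\in Y\}$ happen to form a chain under inclusion, then for every pair $(y_1,y_2)$ one of $M_{y_1 y_2},M_{y_2 y_1}$ vanishes, so $N=0$ even though each marginal sum $\sum_{y_2} M_{y_1 y_2}$ can be of order $\eps^2 n^2$. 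Of course the hypothesis $\delta_R(X)\ge \eps n$ rules out an exact chain (this is precisely the punchline of Proposition~\ref{prop:cute}), but your averaging/Cauchy--Schwarz outline never uses that hypothesis in a way that breaks the chain, so it cannot succeed as written. A secondary issue: even granting the invalid step, you arrive at $\Omega(\eps^5 n^4)$, which for small $\eps$ is \emph{weaker} than the claimed $\eps^4 n^4/150$, not stronger --- so ``slack to absorb constants'' is backwards here.

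The paper's proof is quite different and engages directly with the chain obstruction: it fixes a vertex $v\in X$ of \emph{minimum} red degree (within a suitably pruned set), lets $S=R(v)$, and uses minimality to force every other $x$ to have at least $\eps n/6$ red neighbours \emph{outside} $S$ once $x$ is known to have that many blue neighbours inside $S$. This produces $\Omega(\eps^3 n^3)$ copies of $M_1$ through $v$, and an outer argument (showing that $\Omega(\eps n)$ vertices must each lie in that many copies) yields the final $\eps^4 n^4/150$. The extremal ``minimum degree vertex'' choice is exactly what your pure-averaging approach is missing.
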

    \begin{proof}
    Let $A$ be the set of vertices in $X$ contained in at least $\eps^3 n^3/72$ copies of $M_1$. We will show that $| A | \geq \frac{\eps n }{2}$. This implies the lemma, since the number of copies of $M_1$ is at least $|A|\eps^3 n^4 / 150$. 

    Assume that $|A | < \frac{\eps n }{2}$, and let $A'  = X \setminus A$. Note that $\delta_R(A'), \delta_B(Y)\geq \eps n/2$ by the assumption on the size of $A'$. Let $v$ be a vertex of $A'$ with minimum red degree, and let $S\subseteq Y$ be the red neighbourhood of $v$. In $A'\setminus\{v\}\times S$, there are at least $\eps n^2/3$ blue edges. It is well-known that every graph with density $\mu$ contains a subgraph with minimum degree $\mu n/2$. So, $A'\setminus\{v\}\times S$ has a subgraph $(C,D)$ with minimum blue degree at least $\eps n/6$ (in particular, $|C|, |D|\geq \eps n/6$). Observe now that for each element $d$ of $D$, each blue neighbour of $d$ in $C$, say $c$, and each red neighbour of $c$ in $Y\setminus S$, say $b$, $\{v,d,c,b\}$ induce a copy of $M_1$. This is because $(v,b)$ must be blue by the assumption that $b\notin S$. There are $|D|\eps n / 6$ choices for such $d$ and $c$, and we claim that any $c \in C$ has at least $\eps n / 6$ red neighbours in $Y \setminus S$, all of which can play the role of $b$. To see this, note that by minimality of $v$, $c$ must have at least $|S|$ red neighbours in $Y$, and that at most $|S|-\eps n/6$ of these neighbours are contained in $S$ (since $c$ has at least $\eps n/6$ blue neighbours in $D \subset S$). Hence, we can find at least $\eps^3 n^3/72$ distinct triples $(d,c,b)$ giving rise to distinct $M_1$ containing $v$. This contradicts the definition of $A$, so we conclude that $|A| \geq \eps n/2$.
    \end{proof}
    
    Proposition~\ref{prop:many-p3c4} follows easily from the previous result.
   
    \begin{proof}[Proof of Proposition~\ref{prop:many-p3c4}]
    Let $G$ be locally $\eps$-balanced 2-coloured $K_n$. By a standard random sampling argument, we can deduce that $G$ contains some $G'=(X, Y)$ which satisfies the hypotheses of Lemma~\ref{lem:alternatingmatchings} with $n:=n/2$ and $\eps:=\eps/2$ (for example, Chernoff's bound is sufficient here). It follows by Lemma~\ref{lem:alternatingmatchings} that $G$ contains $\eps^4 n^4/10^5$ many distinct copies of $M_1$. Note that each edge is present in $M_1$, so each copy of $M_1$ yields at least one copy of $P_3, \overline{P_3}, C_4$ or $\overline{C_4}$ depending on the colour of the edges not given by the $M_1$. This implies the proposition.
    \end{proof}

    \section{Multiple colours}\label{sec:morecolours}
    
    In this section, we investigate the $r$-colour variant of Question~\ref{question:mainquestion}. To allow for a more precise discussion, we give the following two definitions. Given a totally-coloured graph $H$, the $t$-blow-up of $H$ is denoted by $H[t]$. A totally $r$-coloured graph $H$ is called \textbf{unibalanced} if each vertex of $H[2]$ is incident to an edge in each of the $r$ colours.\footnote{In other words, $H$ is unibalanced if every vertex $v$ of $H$ is incident to each of the available $r$ colours (the value of $r$ will be clear from context), where a vertex $v$ is also considered incident to $c$ if it is coloured $c$.} For instance, the patterns $P_1$ and $P_3$ are unibalanced, but $P_2$ is not. Observe that for a totally-coloured graph $H$, $H[t]$ is locally $\eps$-balanced for some value of $\eps>0$ if and only if $H$ is unibalanced. We call a family $\mathcal{F}$ of $r$-colourings of $K_n$ locally $(r,\eps)$-\textbf{unavoidable} if every locally $\eps$-balanced colouring of $K_n$ where $n$ is sufficiently large contains a copy of some $F\in \mathcal{F}$. 
 \begin{question} \label{que:multicolour} Suppose $K_n$ is given a locally $\eps$-balanced $r$-edge-colouring. Which subgraphs must $K_n$ necessarily contain? 
 \end{question}
 Obviously, colourings as in Question~\ref{que:multicolour} are globally $\eps$-balanced as well. In \cite{BLM}, a multicolour version of Theorem~\ref{thm:foxsudakov} is provided (see Theorem 4). This result guarantees, just using a global balancedness assumption, the existence of a rich family of patterns, say $\mathcal{F}^{(r)}$, where each pattern in the family uses all $r$ colours (see Figure 1 in \cite{BLM} for a depiction of patterns guaranteed when $r=3$). In particular, $\mathcal{F}^{(r)}$ is locally $(r,\eps)$-unavoidable for every $\eps>0$. Yet, $\mathcal{F}^{(r)}$ does not give a satisfying answer to Question~\ref{question:mainquestion}, for the same reason the family $\mathcal{F}_t$ (see Figure~\ref{fig:unavoidable2patterns}) does not give a satisfying answer to Question~\ref{question:mainquestion}. That is, elements of $\mathcal{F}^{(r)}$ are not locally balanced, leaving open the possibility that there exists either a smaller (as in Theorem~\ref{thm:quarterbalanced}) or more complex (as in Theorem~\ref{thm:anybalanced}) family of unavoidable patterns. On the other hand, as in the two-colour case, any $(r,\eps)$-unavoidable family $\mathcal{F}$ consisting of blow-ups of a finite family of unibalanced totally-coloured graphs $\mathcal{F}'$ (assuming that no member of $\mathcal{F}'$ contains another member) would give a structurally optimal answer to Question~\ref{question:mainquestion}). Indeed, members of such $\mathcal{F}$ would be themselves locally $\eps$-balanced for some $\eps>0$, making it impossible for a smaller, or more complex family of $(r,\eps)$-unavoidable graphs to exist (for $\eps$ sufficiently small).

We can now explain why already for $r\geq 3$, the problem is rather different. A consequence of Theorem~\ref{thm:anybalanced} is that for \textit{any} $\eps>0$ and $t\geq 1$, the family $\{P_1[t],\bar{P_1}[t], P_3[t]\}$ is $(2,\eps)$-unavoidable.  For $r= 3$, if $\Fc$ is a finite family consisting of unibalanced totally-coloured graphs whose blow-ups are $(3,\eps)$-unavoidable, then the following proposition shows that $|\Fc|$ has to have size at least $1/\eps$.

\begin{figure}[h!bt]
\centering
\includegraphics[width=0.8\textwidth]{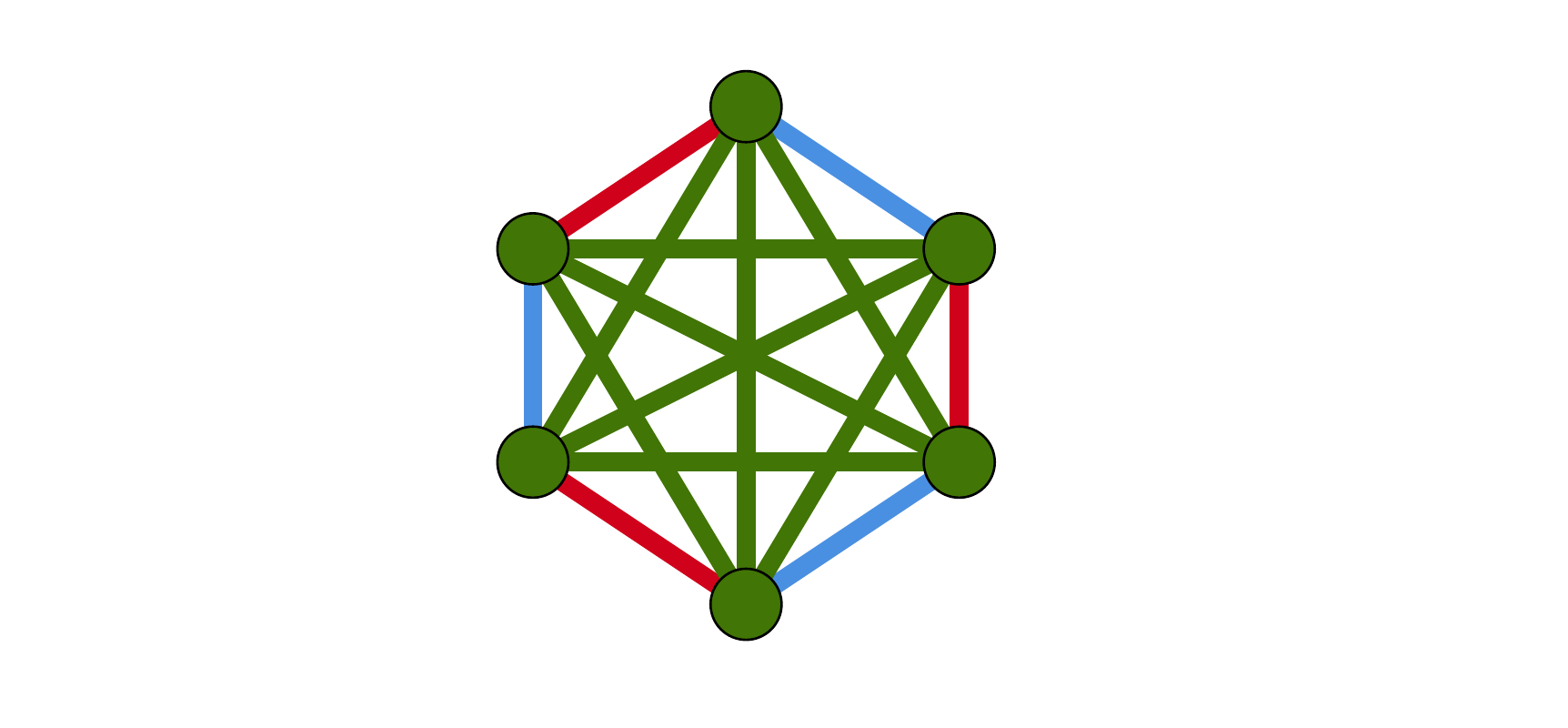}

    \caption{A three-colouring of $K_n$ in which any locally balanced subgraph has at least $6$ vertices, where $6$ is the number of parts. This colouring is locally $(1/6)$-balanced. Colourings of this type (where two colour classes form a blow-up of an alternating cycle) show that there is no analogue of Theorem~\ref{thm:anybalanced} for more than two colours.}
    
    \label{fig:3-colours}
\end{figure}


    \begin{proposition} ~\label{prop:3colourfail}
        There exists a locally $\eps$-balanced $3$-colouring of $K_n$ in which any unibalanced subgraph has at least $\lfloor \eps^{-1} \rfloor-1$ vertices.
    \end{proposition}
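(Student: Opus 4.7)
The plan is to give an explicit construction generalising the $k=6$ example in Figure~\ref{fig:3-colours}. Let $k$ be the largest even integer with $k \leq \eps^{-1}$, so $k \geq \lfloor \eps^{-1} \rfloor - 1$ and $1/k \geq \eps$. Assuming $k \mid n$, partition $V(K_n)$ into equal parts $V_1, \ldots, V_k$ viewed cyclically. I would colour every edge between $V_i$ and $V_{i+1 \pmod k}$ with colour $1$ if $i$ is odd and with colour $2$ if $i$ is even (this is where the evenness of $k$ is used), and colour every remaining edge of $K_n$ (those inside parts and those between cycle-non-adjacent pairs) with colour $3$.

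For the local balance check, each $v \in V_i$ has exactly $n/k$ colour-$1$ neighbours (lying entirely in the unique cycle-neighbour part of $V_i$ joined by colour-$1$ edges) and $n/k$ colour-$2$ neighbours, while all other neighbours of $v$ are in colour $3$. Since $n/k \geq \eps n$, the colouring is locally $\eps$-balanced. For the structural claim, let $S$ be a unibalanced subgraph and set $I = \{i \in [k] : S \cap V_i \neq \emptyset\}$. For any $v \in S \cap V_i$, the requirement that $v$ have a colour-$1$ neighbour inside $S$ forces the unique colour-$1$-adjacent part $V_{\sigma_1(i)}$ to meet $S$, so $\sigma_1(i) \in I$; symmetrically $\sigma_2(i) \in I$.

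Hence $I$ is closed under both involutions $\sigma_1$ and $\sigma_2$, but alternating these two maps traces out the Hamilton cycle $V_1 V_2 \cdots V_k V_1$, so once a single index lies in $I$, every index does. Thus $I = [k]$ and $|S| \geq k \geq \lfloor \eps^{-1} \rfloor - 1$. The only real subtlety is parity: evenness of $k$ is essential for the proper alternating $2$-edge-colouring of the cycle to exist, and the ``$-1$'' in the bound is precisely the possible loss from rounding $\eps^{-1}$ down to an even integer. Notably, the colour-$3$ incidence required by the definition of unibalanced plays no role in the lower bound: the whole argument rides on the fact that colours $1$ and $2$ individually form perfect matchings on the parts, and that these two matchings together form a connected cycle.
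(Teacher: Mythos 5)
Your construction is exactly the one in the paper (partition into $k$ parts, alternate colours $1$ and $2$ around the cycle, use colour $3$ for everything else), and your argument that a unibalanced subgraph must meet every part matches the paper's one-line assertion; you simply spell out the details (the two involutions $\sigma_1,\sigma_2$, the balance check, the parity point) that the paper leaves implicit. Correct, same approach.
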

    \begin{proof}
        Partition the vertex set into parts $V_1, \ldots, V_\ell$ of order at least $\eps n$, where $\ell \geq \lfloor \eps^{-1} \rfloor-1$ is an even number. Colour the bipartite graphs between $V_i$ and $V_{i+1}$ red and blue alternatingly (with indices modulo $\ell$). Colour the remaining edges green. For an illustration of the case when $\ell=6$, see Figure~\ref{fig:3-colours}. This colouring is locally $\eps$-balanced and any non-empty unibalanced subgraph of this colouring must contain a vertex from each of $V_1, \ldots, V_\ell$, as required.
    \end{proof}

Proposition~\ref{prop:3colourfail} implies that there can be no straightforward generalisation of Theorem~\ref{thm:anybalanced} for more than two colours. That is, there is no single finite family of unibalanced totally-coloured (with $r$ colours) graphs whose blow-ups are $(r,\eps)$-unavoidable for every $\eps>0$. However, we can still prove a version of Theorem~\ref{thm:anybalanced}, with the caveat that the size of the family of unavoidable patterns depends on $\eps$.

\begin{theorem} \label{t:multicol}
	Given $\gammep >0$, an integer $r$, and $C = \frac{80}{\gammep}  \log  \frac 1 \gammep  $, there is a constant $\alpha$ such that the following holds for sufficiently large $n$. Any locally $\gammep$-balanced $r$-colouring of an $n$-vertex graph $G$ contains a homogeneous $\alpha \log n$-blow-up of some unibalanced graph on at most $C$ vertices.
\end{theorem}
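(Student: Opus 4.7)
The plan is to find many copies of a single edge-coloured ``pattern'' $H'$ on $C$ vertices in which every vertex is incident to edges of all $r$ colours, and then invoke Lemma~\ref{l:r-partite-coloured}. Given such copies in abundance, the lemma produces a homogeneous $\alpha\log n$-blow-up of $H'$, whose parts $V_1,\dots,V_C$ are monochromatic cliques in some colours $c_1,\dots,c_C$. Setting the vertex colour of $v_i\in V(H')$ to $c_i$ turns $H'$ into a totally $r$-coloured graph $\tilde H$, and the homogeneous blow-up is, by construction, a blow-up of $\tilde H$. Crucially, because every vertex of $H'$ already meets every colour via its edges, $\tilde H$ is unibalanced irrespective of how the $c_i$ turn out.

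To produce many such copies, let $S\subseteq V(G)$ be a uniformly random $C$-subset and call $S$ \emph{good} if every vertex $v\in S$ has a neighbour of every colour inside $S$. For fixed $v$ and $c$, local $\eps$-balancedness gives $|N_c(v)|\geq \eps n$, so
\[
\Pr\bigl[N_c(v)\cap (S\setminus\{v\})=\emptyset \,\big|\, v\in S\bigr] \leq (1-\eps)^{C-1} \leq e^{-\eps C/2}.
\]
A union bound over the at most $C$ vertices of $S$ and the $r$ colours gives a failure probability of at most $Cr\cdot e^{-\eps C/2}$. Note that local $\eps$-balancedness forces $r < 1/\eps$, since the $r$ colour-neighbourhoods of a vertex are disjoint and each has at least $\eps n$ elements. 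With $C=(80/\eps)\log(1/\eps)$, we have $\eps C/2 = 40\log(1/\eps)$, which comfortably dominates $\log(Cr) = O(\log(1/\eps))$, so the failure probability is well below $1/2$.

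Hence at least half of the $\binom{n}{C}$ possible $C$-subsets are good. Since there are at most $M\leq (r+1)^{\binom{C}{2}}$ edge-coloured graphs on $C$ labelled vertices (a constant in $\eps$ and $r$), a pigeonhole argument produces an edge-coloured graph $H'$ with at least $\frac{1}{2M}\binom{n}{C}$ good subsets inducing a copy of $H'$, and hence at least $c'n^C$ labelled embeddings of $H'$ into $G$, for some $c'=c'(\eps,r)>0$. Applying Lemma~\ref{l:r-partite-coloured} with this $c'$ and $\ell=C$ yields the desired homogeneous $\alpha\log n$-blow-up of $H'$, and thus of $\tilde H$, for some $\alpha=\alpha(\eps,r)>0$.

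The only delicate step is the numerical verification of the union bound; the constant $80$ is chosen precisely so that $\eps C/2 = 40\log(1/\eps)$ absorbs the $\log(Cr)$ contribution after substituting $r \leq 1/\eps$. The rest of the argument is a direct combination of pigeonhole and Lemma~\ref{l:r-partite-coloured}, in the same spirit as the proofs of Theorems~\ref{thm:quarterbalanced} and~\ref{thm:anybalanced}.
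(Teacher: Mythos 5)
Your proof is correct and follows essentially the same approach as the paper: sample a random vertex subset of size roughly $C$, show it is unibalanced with probability $>1/2$ using the key fact that $r<1/\eps$, pigeonhole over the finitely many edge-coloured isomorphism types, and invoke Lemma~\ref{l:r-partite-coloured} to obtain a homogeneous blow-up whose induced vertex colours produce the unibalanced totally-coloured pattern. The only minor difference is that you sample a uniformly random $C$-set directly, whereas the paper samples each vertex independently with probability $\zeta/n$ for a suitable $\zeta \le C/2$ and then transfers the estimate to fixed-size subsets via a Chernoff bound; the resulting pigeonhole step and application of Lemma~\ref{l:r-partite-coloured} are identical.
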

\begin{proof}
	Let $\gammep \gg \eta \gg \alpha \gg 1/n$. We have the following claim.
	\begin{claim} \label{c:random-subset}
		There exists $k \leq C$ such that there are at least $\frac 14 \binom nk$ vertex subsets $S$ of order $k$ inducing a unibalanced $r$-coloured subgraph.
	\end{claim}
	\begin{proof}
		Denote the family of subsets $S \subset [n]$ which induce a unibalanced subgraph by $\mathcal{U}$. Let $S$ be a random set of vertices where each vertex is sampled  independently with probability $\frac{\zeta}{n}$, with $\zeta = \frac{20}{\gammep} \left(\log r+ \log \frac{1}{\gammep} \right) \leq \frac C2$. We will show that the probability that $S \notin \mathcal{U}$ is at most $\frac 12$.

		For $v \in V(G)$, let $A_i(v)$ be the event that there is a vertex $u \in S$ such that $uv$ has colour $i$ (note that this event does not depend on whether $v$ is in $S$), and let $A(v) = \bigcap_{i \in [r]} A_i(v)$.
		
		Since $A_i(v)$ are mutually independent for $i \in [r]$, we have
		$$\pr{A(v)} = \prod_{i \in [r]}  \pr{A_i(v)}.$$
		
		Moreover, by the locally $\gammep$-balancedness assumption, we have that $\pr{A_i(v)} \geq 1- \left( 1- \frac{\zeta} {n}\right)^{\gammep n} \geq 1- e^{-\zeta \gammep /2}$ for $n$ sufficiently large, so 
		$$\pr{A(v)} \geq 1- re^{-\zeta \gammep /2}.$$
		Since $A(v)$ and $v \in S$ are independent events, we have, for any $v \in [n]$,
		$$\pr{\overline{A(v)} \land v \in S} \leq \frac{\zeta}{n}\cdot re^{-\zeta \gammep /2}$$
		
		Denoting the random variable counting the vertices $v \in S$ for which $A(v)$ does not occur by $X$ and substituting for $\zeta$, we have
		\begin{align*}
			\er{X} &\leq \zeta r e^{-\zeta \gammep /2}
			\leq  \frac{20}{\gammep}\left(  \log r  + \log \frac{1}{\gammep} \right) r e^{-10 \log r + 10\log \frac{1}{\gammep}} \\
			&\leq 40r^2 \gammep ^{-2}\cdot r^{-10}\gammep^{10} < 40\left(\gammep r^{-1} \right)^{8}    < 40\cdot 4^{-8} < \frac 12.
		\end{align*}
		
		Clearly, $S\notin\mathcal{U}$ only if $X\geq 1$. By Markov's inequality, the probability that $X \geq 1$ is at most $1/2$, so $\pr{S \notin \mathcal{U}} \leq \frac 12$, as claimed.
		
		To complete the proof of the claim, assume that
		\begin{equation} \label{eq:unibalanced-prob}
			\left| \mathcal{U} \cap \binom{[n]}{k} \right| \leq \frac 14 \binom nk \text{ \quad for all \quad } k \leq C.
		\end{equation}
		It follows that
		\begin{equation*}
			\pr{S \in \mathcal{U}} \leq \sum_{k=0}^{C} \pr{ S \in \mathcal U \;\middle|\; |S| = k} + \pr{|S| > C}
			\leq \sum_{k=0}^{C} \frac 14 \cdot \pr{|S| = k} + \frac 18 \leq \frac 38,
		\end{equation*}
		where the second inequality follows from~\eqref{eq:unibalanced-prob} and the Chernoff bound. We reached a contradiction, completing the proof of the Claim.
	\end{proof}
	Hence, for some $k$-vertex unibalanced graph $H$, $G$ contains at least $\frac 14 r^{-C^2} \binom nk$ copies of $H$; to see this, note that there are at most $r^{C^2}$ options for $H$.
	Applying Lemma~\ref{l:r-partite-coloured}, we obtain a homogeneous $\alpha \log n$-blow-up of $H$, as required.
\end{proof}

    \section{Concluding remarks}\label{sec:concludingremarks}
    
    \textbf{Asymptotics.} It remains an interesting open problem to improve the quantitative estimates from Theorem~\ref{thm:quarterbalanced} and Theorem~\ref{thm:anybalanced}. There is more room for improvement in Theorem~\ref{thm:quarterbalanced} compared to Theorem~\ref{thm:anybalanced}, but we believe both estimates should be quite far away from the truth. In particular, we don't see an inherent reason why the asymptotics of the locally balanced Ramsey problem should be different from the globally balanced Ramsey problem. That is, we believe that there should be an absolute constant $C$ so that when $n\geq (1/\eps)^{Ct}$, the conclusions of Theorems~\ref{thm:quarterbalanced} and ~\ref{thm:anybalanced} hold. Such a bound would be of the same order of magnitude as the bound from Theorem~\ref{thm:foxsudakov}. This bound would also be tight up to the value of $C$, as can be justified with a simple probabilistic construction. 
    \par The main obstacle to proving a bound of this form comes from our reliance on Lemma~\ref{l:r-partite-coloured}. Fox, Luo, and Wigderson~\cite{flw21} have recently combined the method of Nikiforov with ideas from graph regularity to obtain better estimates for a version of Lemma~\ref{l:r-partite-coloured} where the blow-up guaranteed is not necessarily homogeneous. Their ideas could quite possibly be modified to guarantee homogeneous blow-ups, but the resulting bound would still be rather far from being able to prove optimal bounds for Theorems~\ref{thm:quarterbalanced} and \ref{thm:anybalanced}. 
    
    \par \textbf{The extremal aspect.} A fruitful direction of research in the globally balanced version of the problem has been finding patterns in globally $\eps$-balanced $K_n$ where $\eps$ is a function of $n$. This seems to be an intriguing direction in the locally balanced setting as well, especially in the setting of Theorem~\ref{thm:anybalanced}. Namely, we raise the following problem. Let $t\geq 1$ be some integer. What is the smallest function $\eps:=\eps(n,t)$ as $n\to \infty$ such that any locally $\eps$-balanced $2$-coloured $K_n$ contains a $t$-blow-up of $P_1$, $\overline{P_1}$ or $P_3$? It already seems like an interesting challenge to prove $\eps\leq n^{C/t^2}$ for some absolute constant $C$.

\section*{Acknowledgements}
    We are grateful to Kyriakos Katsamaktsis for stimulating discussions at the early stages of this project, especially for the formulation of Lemma~\ref{lem:alternatingmatchings}, and to Shoham Letzter for helpful comments on the manuscript.

\bibliographystyle{plain}
\bibliography{bib}

\begin{thebibliography}{10}

\bibitem{BLM}
Matthew Bowen, Ander Lamaison, and Alp M{\"u}yesser.
\newblock Finding unavoidable colorful patterns in multicolored graphs.
\newblock {\em The Electronic Journal of Combinatorics}, 27, 2020.

\bibitem{caro2021unavoidable}
Yair Caro, Adriana Hansberg, and Amanda Montejano.
\newblock Unavoidable chromatic patterns in 2-colorings of the complete graph.
\newblock {\em Journal of Graph Theory}, 97(1):123--147, 2021.

\bibitem{cf13}
David Conlon and Jacob Fox.
\newblock Graph removal lemmas.
\newblock In {\em Surveys in combinatorics 2013}, volume 409 of {\em London
  Math. Soc. Lecture Note Ser.}, pages 1--49. Cambridge Univ. Press, Cambridge,
  2013.

\bibitem{CM}
Jonathan Cutler and Bal{\'a}zs Mont{\'a}gh.
\newblock Unavoidable subgraphs of colored graphs.
\newblock {\em Discrete Mathematics}, 308(19):4396--4413, 2008.

\bibitem{es35}
Paul Erd\H{o}s and George Szekeres.
\newblock A combinatorial problem in geometry.
\newblock {\em Compositio Math.}, 2:463--470, 1935.

\bibitem{flw21}
Jacob Fox, Sammy Luo, and Yuval Wigderson.
\newblock Extremal and {R}amsey results on graph blowups.
\newblock {\em J. Comb.}, 12(1):1--15, 2021.

\bibitem{fox2008induced}
Jacob Fox and Benny Sudakov.
\newblock Induced {R}amsey-type theorems.
\newblock {\em Advances in Mathematics}, 219(6):1771--1800, 2008.

\bibitem{FS}
Jacob Fox and Benny Sudakov.
\newblock Unavoidable patterns.
\newblock {\em Journal of Combinatorial Theory, Series A}, 115(8):1561--1569,
  2008.

\bibitem{fs11}
Jacob Fox and Benny Sudakov.
\newblock Dependent random choice.
\newblock {\em Random Structures Algorithms}, 38(1-2):68--99, 2011.

\bibitem{girao2022two}
Ant{\'o}nio Gir{\~a}o and Robert Hancock.
\newblock Two {R}amsey problems in blowups of graphs.
\newblock {\em arXiv preprint arXiv:2205.12826}, 2022.

\bibitem{girao2022turan}
Ant{\'o}nio Gir{\~a}o and Bhargav Narayanan.
\newblock Tur{\'a}n theorems for unavoidable patterns.
\newblock In {\em Mathematical Proceedings of the Cambridge Philosophical
  Society}, volume 172, pages 423--442. Cambridge University Press, 2022.

\bibitem{kst54}
Thomas K\H{o}vari, Vera S\'{o}s, and P\'al Tur\'{a}n.
\newblock On a problem of {K}. {Z}arankiewicz.
\newblock {\em Colloq. Math.}, 3:50--57, 1954.

\bibitem{muyesser2020turan}
Alp M{\"u}yesser and Michael Tait.
\newblock Tur{\'a}n-and {R}amsey-type results for unavoidable subgraphs.
\newblock {\em Journal of Graph Theory}, 2020.

\bibitem{nagy2019supersaturation}
Zolt\'{a}n~L\'{o}r\'{a}nt Nagy.
\newblock Supersaturation of {$C_4$}: from {Z}arankiewicz towards
  {E}rd{\H{o}}s-{S}imonovits-{S}idorenko.
\newblock {\em European J. Combin.}, 75:19--31, 2019.

\bibitem{nikiforov08}
Vladimir Nikiforov.
\newblock Graphs with many {$r$}-cliques have large complete {$r$}-partite
  subgraphs.
\newblock {\em Bull. Lond. Math. Soc.}, 40(1):23--25, 2008.

\end{thebibliography}

\end{document}